\newtheorem{theorem}{Theorem}[section]
\newtheorem{lemma}[theorem]{Lemma}
\newtheorem{prop}[theorem]{Proposition}
\newtheorem{cor}[theorem]{Corollary}
\theoremstyle{definition}
\theoremstyle{remark}
\newtheorem{remark}[theorem]{Remark}
\numberwithin{equation}{section}
\newcommand{\diff}{\mathrm{d}}
\DeclareMathOperator{\sgn}{sgn}
\DeclareMathOperator{\sq}{sq}
\DeclareMathOperator{\cq}{cq}
\DeclareMathOperator{\tq}{tq}
\DeclareMathOperator{\arcsq}{arcsq}
\newcommand{\Beta}{\mathrm{B}}
\renewcommand{\Beta}{\mathrm{B}}
\begin{document}

\title{Derivative polynomials and infinite series for squigonometric functions}

%    Information for first author
\author{Bart S. van Lith}
%    Address of record for the research reported here
\address{ASML}
%    Current address
%\curraddr{Department of Mathematics and Statistics,
%Case Western Reserve University, Cleveland, Ohio 43403}
\email{bartvanlith@gmail.com}
%    \thanks will become a 1st page footnote.
\thanks{Author supported by.}

%    Information for second author
% \author{Author Two}
% \address{Mathematical Research Section, School of Mathematical Sciences,
% Australian National University, Canberra ACT 2601, Australia}
% \email{two@maths.univ.edu.au}
% \thanks{Support information for the second author.}

%    General info
\subjclass[2020]{Primary 26A06; Secondary 26A24}

\date{\today.}

\keywords{Squigonometry, derivative polynomials, number triangles}

\begin{abstract}
All squigonometric functions admit derivatives that can be expressed as polynomials of the squine and cosquine. We introduce a general framework that allows us to determine these polynomials recursively. We also provide an explicit formula for all coefficients of these polynomials. This also allows us to provide an explicit expression for the MacLaurin series coefficients of all squigonometric functions. We further discuss some methods that can compute the squigonometric functions up to any given tolerance over all of the real line.
\end{abstract}

\maketitle

\section{Introduction: squigonometry}

Squigonometry is the study of imperfect circles, according to Robert D. Poodiack and William E. Wood \cite{wood_squigonometry}. The subject deals with geometric planar shapes that are, in some sense, generalizations of circles, i.e., shapes defined by
\begin{equation}\label{eq:squircle_definition}
    |x|^p + |y|^p = 1.
\end{equation}
We can consider such shapes for many values of $p$, though typically we constrain ourselves to $1\leq p < \infty$, with the $p=\infty$ case as a limit. The most common alternative value is $p=4$, and many of our examples will be for this case. Shapes described by \eqref{eq:squircle_definition} are sometimes called $p$-circles or squircles, as they look like a hybrid between squares and circles. In fact, for either $p=1$ or $p=\infty$, they are in fact squares. Ordinary trigonometry, dealing with circles, is the special case where $p=2$. See Figure~\ref{fig:squircles_plot} for some examples.

\begin{figure}
    \centering
    \includegraphics[width=0.5\linewidth]{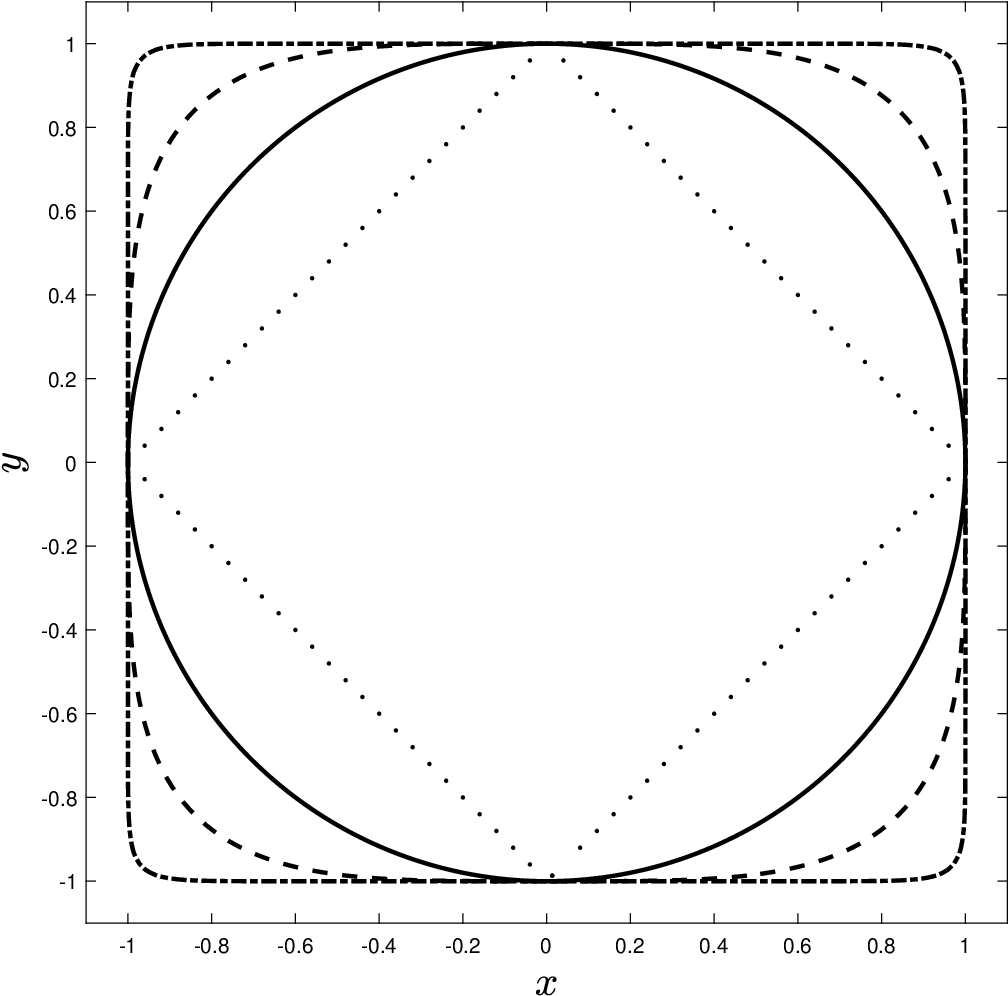}
    \caption{Several squircles for $p=1$ (dotted), $p=2$ (solid), $p=4$ (dashed), and $p=20$ (dash-dot).}
    \label{fig:squircles_plot}
\end{figure}

Just like trigonometric functions, it is possible to define the squigonometric functions that parametrize the $p$-circle. However, unlike trigonometry, there are various ways of doing so. For instance, we may consider parametrization by area, arc length, or angle. For the case $p=2$, these parametrizations all happen to coincide, while they do not for other $p$. We will employ the areal parametrization, which is probably the most common. We start by defining the inverse functions by means of an integral, i.e., the arcsquine is defined by
\begin{equation}\label{eq:arcsquine_integral}
    \arcsq(x) = \int\limits_0^x (1-u^p)^{\frac{1}{p}-1} \diff u.
\end{equation}
The case $p=2$ corresponds to the familiar arcsine function. There seems to be no standard notation or naming convention at the moment. We will adopt $\sq t$ and $\cq t$ and name them squine and cosquine respectively, with the order $p$ understood. In case the order is unclear, we shall append it as a subscript. We have plotted the squine and cosquine for $p=4$ in Figure~\ref{fig:squine_cosquine_plot}.

\begin{figure}
    \centering
    \includegraphics[width=0.7\linewidth]{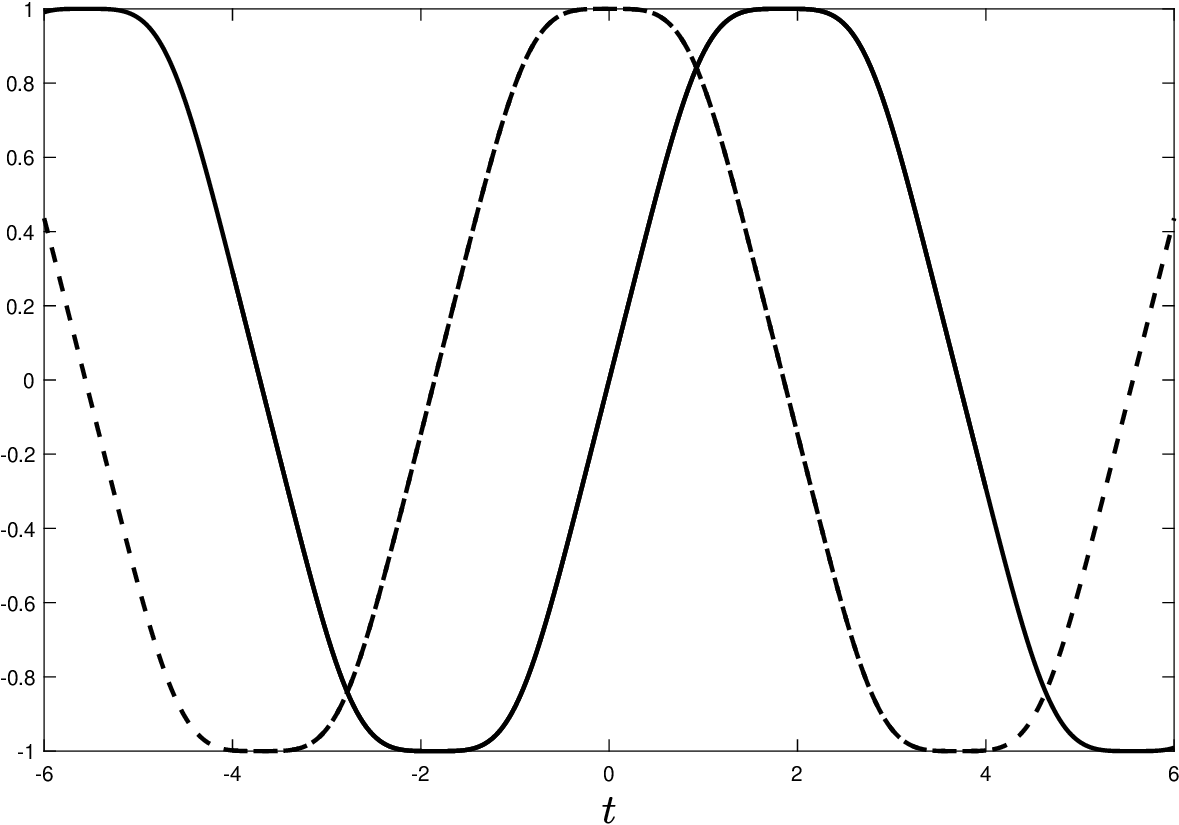}
    \caption{Squine (solid) and cosquine (dashed) for $p=4$.}
    \label{fig:squine_cosquine_plot}
\end{figure}

It is fairly easy to see that the integrand of \eqref{eq:arcsquine_integral} is positive on the interval $[0,1]$, so that we can actually define an inverse. The complete integral from $0$ to $1$ corresponds to a special number, defined as $\pi_p = 2 \arcsq(1)$, which is a generalization of $\pi$. The squine is, as a consequence, monotonic on $[0,\frac{\pi_p}{2}]$. The cosquine can then be defined as $\cq t = \sq(\frac{\pi_p}{2}-t)$. We simply define the squine and cosquine to satisfy the familiar shift and reflection identities such as $\sq(\pi_p-t) = \sq t$, etc. This allows us to extend their definition to all of $\mathbb{R}$. Defining the squine and cosquine on $\mathbb{C}$ is a tricky matter that we will not deal with here.

It is possible to show that in the first quadrant, the derivative of the squine and cosquine satisfy
\begin{subequations}\label{eq:squigonometric_derivatives}
    \begin{align}
        \frac{\diff}{\diff t} \sq t &= \cq^{p-1} t,\\
        \frac{\diff}{\diff t} \cq t &= -\sq^{p-1} t.
    \end{align}
\end{subequations}
Thus, as an alternative, we may define them to satisfy this set of differential equations, together with the initial condition $\sq 0 = 0$, $\cq 0 = 1$. Being parametrizations of the squircle, the squine and cosquine satisfy the Pythagorean identity
\begin{equation}\label{eq:pythagorean_identity}
    |\sq t|^p + |\cq t|^p = 1.
\end{equation}
If $p$ is an even integer, or we are working in the first quadrant, we can ignore the absolute values.

The ordinary sine and cosine exhibit power series that are not too hard to obtain by a variety of ways. For instance, calculating all derivatives is a possibility, so that we can apply Taylor's Theorem. Another method is to use Euler's identity for the complex exponential, i.e., $e^{i\theta}=\cos \theta + i\sin \theta$. The exponential function has, arguably, one of the simplest power series, so that the power series for sine and cosine are very easily obtained. The power series of the squine and cosquine are less simple, for instance, we have
\begin{equation}
    \sq t = t - \frac{p-1}{p(p+1)}t^{p+1} + \frac{(p-1)(2p^2-3p-1)}{2p^2(p+1)(2p+1)}t^{2p+1} + \ldots
\end{equation}
Clearly, the nonzero power series coefficients for the squine occur at powers $1+pj$ for $j=0,\ldots,\infty$. According to Poodiack and Wood, finding the coefficients of the MacLaurin series is somewhat of an open problem. Until now, only general methods such as series reversion techniques have proven effective. It is this open problem that has motivated this work.

Unlike trigonometric functions, \textit{squigonometric functions have finite radius of convergence}. The simplest expression for this radius is probably given by $R_p = \frac{\pi_p}{4}\sec(\frac{\pi}{p})$. Thus, $R_2 = \infty$, but, $R_4$ is roughly $1.31$. Moreover, it is not hard to show that $R_p \downarrow 1$ as $p \to \infty$. Since $\frac{\pi_p}{4}<1$, we can therefore extend the squigonometric functions to the entire real line by symmetry and periodicity, which we will use further on.

The means by which we find the MacLaurin series is to define derivative polynomials for general squigonometric functions. This concept has been introduced by Hoffman to find derivatives of the tangent and secant \cite{hoffman1995, hoffman1999}. He noticed, for instance, that all derivatives of the tangent can be written as polynomials in the tangent. These polynomials can then be calculated using very simple integer recursions. Boyadzhiev later generalized these results and found a connection to the Stirling numbers \cite{boyadzhiev2007}.

Our approach is also based on derivative polynomials. This leads to a family of integer triangles satisfying a very simple recursion that allows us to calculate all derivatives of general squigonometric functions. From these, finding Taylor and MacLaurin series is only a small step.

\section{Derivative polynomials}
For the sake of generality, we take as our prototypical squigonometric function $\cq^m t \cdot \sq^n t$, which allows us to handle all squigonometric functions by appropriate choices of $n$ and $m$. For instance, the tanquent $\frac{\sq t}{\cq t}$ arises from the choice $n=1$ and $m=-1$. Taking the derivative, we have
\begin{equation}
    \frac{\diff }{\diff t}\cq^m t \cdot \sq^n t = m \cq^{m+p-1} t \cdot \sq^{n-1} t -n \cq^{m-k} t \cdot \sq^{n+p-1} t .
\end{equation}
However, we note that we can cast this in a more symmetric form
\begin{equation}
    \frac{\diff }{\diff t}\cq^m t \cdot \sq^n t = \cq^{m-1} t \cdot \sq^{n-1} t \left( n \cq^p t - m \sq^p t\right).
\end{equation}
Even though we cannot simply cancel the term inside the brackets, the Pythagorean identity does allow us to rewrite it in terms of the tanquent, so that
\begin{equation}
    \frac{\diff }{\diff t}\cq^m t \cdot \sq^n t = \cq^{m-1} t \cdot \sq^{n-1} t \cdot  \frac{n  - m \tq^p t}{1 + \tq^p t}.
\end{equation}
It turns out that this relation is generalizable to any $k$th derivative with $k\geq 0$.

\begin{theorem}
    For any $n,m \in \mathbb{Z}$, $p \geq 2$ an integer and $k\geq 0$ an integer, we have
    \begin{equation}\label{eq:symmetric_derivative_polynomial}
        \frac{\diff^k}{\diff t^k} \cq^m t \cdot \sq^n t = \cq^{m-k} t \cdot \sq^{n-k} t \frac{Q_k\big(-\tq^p t\big)}{(1+\tq^p t)^k},
    \end{equation}
    where $Q_k(u)$ is a polynomial that satisfies the recursion
    \begin{equation}\label{eq:derivative_polynomial_recursion}
        Q_{k+1}(u) = \big(n-k + (m + k(p-1))u \big) Q_k(u) + pu (1-u) Q^\prime_k(u),
    \end{equation}
    together with the initial condition $Q_0(u) = 1$.
\end{theorem}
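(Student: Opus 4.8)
The plan is to argue by induction on $k$. The base case $k=0$ is immediate: since $Q_0(u)=1$ and $(1+\tq^p t)^0=1$, the right-hand side of \eqref{eq:symmetric_derivative_polynomial} collapses to $\cq^m t\cdot\sq^n t$, which is the zeroth derivative.

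Before the inductive step, I would record the elementary identities on which everything hinges. From \eqref{eq:squigonometric_derivatives} and the quotient rule one gets $\frac{\diff}{\diff t}\tq t=\frac{\cq^p t+\sq^p t}{\cq^2 t}=\cq^{-2}t$ after invoking the Pythagorean identity \eqref{eq:pythagorean_identity}, and hence $\frac{\diff}{\diff t}\tq^p t=p\,\sq^{p-1}t\,\cq^{-p-1}t$. Dividing \eqref{eq:pythagorean_identity} through by $\cq^p t$ also yields the key algebraic relation $\cq^p t=(1+\tq^p t)^{-1}$. These two facts are what allow powers of $\cq t$ and $\sq t$ to be reabsorbed into powers of $\tq^p t$.

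For the inductive step, assume \eqref{eq:symmetric_derivative_polynomial} holds for a given $k$ and write $v=\tq^p t$, so the right-hand side is a product $A\cdot B\cdot C$ with $A=\cq^{m-k}t\,\sq^{n-k}t$, $B=Q_k(-v)$, and $C=(1+v)^{-k}$. I would differentiate by the product rule and reduce each of the three resulting terms against the common target prefactor $\cq^{m-k-1}t\,\sq^{n-k-1}t\,(1+v)^{-(k+1)}$. For $A'$, factoring out $\cq^{m-k-1}t\,\sq^{n-k-1}t$ and using $\cq^p t=(1+v)^{-1}$ turns the bracket into $[(n-k)-(m-k)v]/(1+v)$. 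For the terms arising from $B'$ and $C'$, the decisive computation is $A\,\frac{\diff v}{\diff t}=p\,\cq^{m-k-1}t\,\sq^{n-k-1}t\,v$, which follows because $\cq^{-p}t\,\sq^p t=v$; this single identity forces all three contributions to share the same cosquine--squine prefactor, so that they can be combined.

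Collecting the three coefficients gives $Q_{k+1}(-v)=[(n-k)-(m-k)v]Q_k(-v)-pv(1+v)Q'_k(-v)-kp\,v\,Q_k(-v)$, and the substitution $u=-v$ reproduces exactly \eqref{eq:derivative_polynomial_recursion}. The only place demanding genuine care is the chain rule through the argument $-\tq^p t$ of $Q_k$: tracking the sign flip between $u$ and $v$ correctly is where errors are likeliest to creep in, though this is bookkeeping rather than a conceptual obstacle. I would also note that the manipulations use the first-quadrant form of \eqref{eq:pythagorean_identity} without absolute values; since both sides of \eqref{eq:symmetric_derivative_polynomial} are real-analytic where the functions are defined, the identity established there extends by the symmetry and analyticity already set up for the squine and cosquine.
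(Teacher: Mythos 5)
Your proof is correct and takes essentially the same route as the paper's: induction on $k$, differentiation of the inductive hypothesis by the product rule, and use of the Pythagorean identity to absorb $\cq^p t$ and $\sq^p t$ into powers of the tanquent, after which collecting terms yields the recursion \eqref{eq:derivative_polynomial_recursion}. The only difference is bookkeeping --- the paper substitutes $u=-\tq^p t$ at the outset (so that $1+\tq^p t = 1-u$ and $\frac{\diff u}{\diff t}=\frac{pu}{\cq t\,\sq t}$), whereas you work with $v=\tq^p t$ and perform the sign flip $u=-v$ only at the end; the intermediate computations are otherwise identical.
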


\begin{remark}
    We define the derivative polynomials in this seemingly strange way so that all coefficients end up being nonnegative for $n,m\geq0$.
\end{remark}

\begin{proof}
    Let us for convenience write $u= -\tq^p t$, then we have
    \begin{equation*}
        \frac{\diff u}{\diff t} = -p \frac{\tq^{p-1} t }{\cq^2 t} = \frac{p u}{\cq t \cdot \sq t}.
    \end{equation*}
    Thus, differentiation of \eqref{eq:symmetric_derivative_polynomial} leads to
    \begin{align*}
        \frac{\diff^{k+1}}{\diff t^{k+1}} \cq^m t \cdot \sq^n t = &\cq^{m-k-1} t \cdot \sq^{n-k-1} t \cdot  \frac{n - k + (m-k)u}{1-u} \frac{Q_k(u)}{(1-u)^k}  \\
        &+ \cq^{m-k-1} t \cdot \sq^{n-k-1} t \cdot pu \left( \frac{Q^\prime_k(u)}{(1-u)^k} + k \frac{Q_k(u)}{(1-u)^{k+1}} \right).
    \end{align*}
    Bringing everything under one denominator, we obtain
    \begin{align*}
        \frac{\diff^{k+1}}{\diff t^{k+1}} \cq^m t \cdot \sq^n t = \frac{\cq^{m-k-1} t \cdot \sq^{n-k-1} t}{(1-u)^{k+1}} \Big( &\left(m-k + (n-k)u \right) Q_k(u) \\
        &+ pu (1-u) Q^\prime_k(u) + kpu Q_k(u) \Big).
    \end{align*}
    Grouping terms, we obtain
    \begin{align*}
        \frac{\diff^{k+1}}{\diff t^{k+1}} \cq^m t \cdot \sq^n t = \frac{\cq^{m-k-1} t \cdot \sq^{n-k-1} t}{(1+u)^{k+1}} \Big( &\left(n-k + (m + k(p-1))u \right) Q_k(u) \\
        &+ pu (1-u) Q^\prime_k(u) \Big).
    \end{align*}
    It is easy to see that the term in brackets is a polynomial, so that it must therefore be equal to $Q_{k+1}(u)$.
\end{proof}

Although the coefficients of the polynomial $Q_k(u)$ are typically not symmetric, they do exhibit a type of symmetry if we swap the highest and lowest powers. This is easily observed if we make the substitution $t^\prime = \frac{\pi_p}{2} - t$. We then obtain
\begin{equation}
\begin{aligned}
    (-1)^k\frac{\diff^k}{\diff t^k} \sq^m t \cdot \cq^n t &= \cq^{n-k}t\cdot \sq^{m-k}t \frac{Q_k(-\frac{1}{\tq^p t})}{( 1+ \frac{1}{\tq^p t} )^k} \\
    &= \cq^{n-k}t\cdot \sq^{m-k}t \frac{ \tq^{pk} t \cdot Q_k(-\frac{1}{\tq^p t})}{( 1+ \tq^p t )^k}.
    \end{aligned}
\end{equation}
From this, we may deduce that $u^k Q_k(\frac{1}{u})$ has the effect of switching $m$ and $n$, which we shall write as $m \leftrightarrow n$. Or, conversely, the derivative polynomial of $\sq^m t \cdot \cq^n t$ is given by the same derivative polynomial as $\cq^m t \cdot \sq^n t$, but with the coefficients reversed according to $j^\prime = k-j$. In particular, this implies $Q_k(u)$ is a  palindromic polynomial for $n=m$.

\subsection{A family of number triangles}

The polynomial recursion can be recast into a recursion on the coefficients. Defining the coefficients as
\begin{equation}
    Q_k(u) = \sum_{j=0}^k q^{(k)}_j u^j,
\end{equation}
it is not too hard to see that we obtain
\begin{equation}\label{eq:recursive_derivative_coefficients}
    q^{(k+1)}_j = (n-k + pj) q^{(k)}_j + \big( m + k(p-1) - p(j-1) \big) q^{(k)}_{j-1}.
\end{equation}
We should point out that the coefficients depend on $n$, $m$, $p$, $k$ and $j$. However, to avoid cumbersome notation, we only explicitly denote $k$ and $j$, as those vary over the various sums and series we will consider, while $n$, $m$ and $p$ are fixed per case. These parameters will thus be taken to be understood from the context. When we wish to make the dependence on $m$ and $n$ explicit, we will use the notation $q^{(k)}_j[m,n]$.

We note that, using the Pythagorean identity \eqref{eq:pythagorean_identity} and a bit of algebra, it is possible to show that
\begin{equation}\label{eq:derivative_polynomial_cq_sq_form}
    \frac{\diff^k}{\diff t^k} \cq^m t \cdot \sq^n t = \sum_{j=0}^k (-1)^j q^{(k)}_j \cq^{m+k(p-1)-pj} t \cdot \sq^{n-k+pj} t.
\end{equation}
We should point out that this is a homogeneous polynomial with homology $h_k = n + m + k(p-2)$. This form will also be useful later when we wish to find the MacLaurin series. It should be pointed out that the coefficient form of the recursion, together with \eqref{eq:derivative_polynomial_cq_sq_form} produces the correct derivatives for a much greater range of parameters. In fact, it holds for all $n,m,p \in \mathbb{R}$. However, we restrict ourselves here to the integer case so that the derivatives take the relatively simple form of polynomials.

We display here the table of coefficients for the 4-cosquine ($m=1$, $n=0$ and $p=4$):
\begin{center}
\begin{tabular}{cccccc}
1 &   &  &   \\
 & 1 &   &  &   \\
  & 3 &   &   \\
  & 6 & 9 &    \\
  & 6 & 81 & 18  \\
  &  & 378 & 549 & 18   \\
  &  & 1134 & 6867 & 2394   \\
\end{tabular}
\end{center}
For instance, we may obtain the $6$th derivative of $\cq_4 t$ from the table, and using \eqref{eq:derivative_polynomial_cq_sq_form} that
\begin{equation}
    \frac{\diff^6}{\diff t^6} \cq_4 t =1134 \cq_4^{11} t \sq_4^2 t - 6867 \cq_4^7t \sq_4^6 t +  2394 \cq_4^3 t \sq_4^{10} t .
\end{equation}

We will now make some small observations on a few specific coefficients in the number triangle. To ease presentation, we will employ the falling factorial, for which we will opt to use the underline notation, i.e.,
\begin{equation}
    x^{\underline{k}} = x(x-1)\cdots(x-k+1).
\end{equation}
We then have the following statements.

\begin{lemma}\label{lem:zeroth_coefficient}
    We have
    \begin{equation}
        q^{(k)}_0 = n^{\underline{k}}.
    \end{equation}
\end{lemma}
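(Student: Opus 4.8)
The plan is to read off the recursion \eqref{eq:recursive_derivative_coefficients} at the specific index $j=0$ and then solve the resulting one-term recursion in closed form by iteration. Setting $j=0$ in \eqref{eq:recursive_derivative_coefficients} gives
\begin{equation*}
    q^{(k+1)}_0 = (n-k) q^{(k)}_0 + \big(m + k(p-1) + p\big) q^{(k)}_{-1}.
\end{equation*}
The coefficient $q^{(k)}_{-1}$ carries a negative lower index and is therefore zero by convention, since $Q_k(u)$ has no negative powers of $u$. I would state this convention explicitly at the outset so that the second term drops out cleanly, leaving the homogeneous recursion $q^{(k+1)}_0 = (n-k)\,q^{(k)}_0$.

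Next I would pin down the base case. Since $Q_0(u) = 1$ by the initial condition of the theorem, the constant term is $q^{(0)}_0 = 1$, which matches the empty-product convention $n^{\underline{0}} = 1$ for the falling factorial.

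Finally I would iterate the recursion from $k=0$ upward, or equivalently run a short induction on $k$. Each step multiplies by the factor $(n-k)$, so
\begin{equation*}
    q^{(k)}_0 = \prod_{i=0}^{k-1}(n-i) = n(n-1)\cdots(n-k+1) = n^{\underline{k}},
\end{equation*}
which is exactly the claim. The induction step is immediate: assuming $q^{(k)}_0 = n^{\underline{k}}$, we get $q^{(k+1)}_0 = (n-k)\,n^{\underline{k}} = n^{\underline{k+1}}$.

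I do not anticipate any genuine obstacle here; the only point requiring care is the bookkeeping convention $q^{(k)}_{-1}=0$, which is what kills the second term in the recursion and makes the $j=0$ case decouple from all higher coefficients. Once that is flagged, the result is a one-line telescoping product. The verification against the displayed number triangle (the leftmost nonzero entries) would make a reassuring sanity check but is not part of the formal argument.
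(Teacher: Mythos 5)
Your proof is correct and follows essentially the same route as the paper: set $j=0$ in the recursion \eqref{eq:recursive_derivative_coefficients}, observe that only the first term survives, and recognize the resulting one-term recursion $q^{(k+1)}_0 = (n-k)\,q^{(k)}_0$ with base case $q^{(0)}_0=1$ as the defining recursion of the falling factorial. Your explicit flagging of the convention $q^{(k)}_{-1}=0$ is a small improvement in rigor over the paper's terser ``we only have the first term.''
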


\begin{proof}
    We observe that setting $j=0$ in the recursion \eqref{eq:recursive_derivative_coefficients}, we only have the first term, such that
    \begin{equation*}
        q^{(k+1)}_0 = (n-k)q^{(k)}_0.
    \end{equation*}
    Using the fact that $q^{(k)}_0=1$, this is nothing but the recursive definition of the falling factorial.
\end{proof}

\begin{lemma}\label{lem:superdiagonal}
    We have
    \begin{equation}
        q^{(k)}_j = 0
    \end{equation}
    for all $j>k$.
\end{lemma}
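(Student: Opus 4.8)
The plan is to prove this by induction on $k$, working directly from the coefficient recursion \eqref{eq:recursive_derivative_coefficients}. The claim that $q^{(k)}_j=0$ for all $j>k$ is just the statement that $\dgr Q_k\leq k$, i.e. that the recursion raises the degree by at most one at each step. The base case $k=0$ is immediate from the initial condition $Q_0(u)=1$, which gives $q^{(0)}_0=1$ and $q^{(0)}_j=0$ for every $j\geq 1$.

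For the inductive step, I would assume $q^{(k)}_j=0$ for all $j>k$ and then fix an arbitrary index $j>k+1$. The point is that $j>k+1$ forces \emph{both} relevant coefficients on the right-hand side of \eqref{eq:recursive_derivative_coefficients} to vanish: we have $j>k$, so $q^{(k)}_j=0$, and we also have $j-1\geq k+1>k$, so $q^{(k)}_{j-1}=0$. Substituting into the recursion, both terms are zero regardless of the values of the prefactors $(n-k+pj)$ and $(m+k(p-1)-p(j-1))$, hence $q^{(k+1)}_j=0$, which closes the induction.

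There is essentially no obstacle here; the only thing worth double-checking is the index bookkeeping, namely that the boundary term $q^{(k)}_{j-1}$ — the one coefficient that could in principle survive — is indeed killed, which is exactly why the threshold $j>k+1$ rather than $j>k$ is the right one at level $k+1$. Equivalently, one may run the argument at the level of the polynomial recursion \eqref{eq:derivative_polynomial_recursion}: if $\dgr Q_k\leq k$, then $\big(n-k+(m+k(p-1))u\big)Q_k(u)$ has degree at most $k+1$ and $pu(1-u)Q'_k(u)$ has degree at most $2+(k-1)=k+1$, so $\dgr Q_{k+1}\leq k+1$, yielding the same conclusion.
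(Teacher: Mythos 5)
Your proof is correct and follows essentially the same route as the paper: induction on $k$, observing that for $j>k+1$ both $q^{(k)}_j$ and $q^{(k)}_{j-1}$ vanish by the induction hypothesis, so the recursion \eqref{eq:recursive_derivative_coefficients} forces $q^{(k+1)}_j=0$. The additional degree-counting remark via \eqref{eq:derivative_polynomial_recursion} is a fine equivalent restatement but adds nothing beyond the paper's argument.
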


\begin{proof}
    We proceed by induction. Clearly, the statement is true for $k=0$, $q^{(0)}_j=0$ for all $j>0$. Assume therefore that the statement is true for some $k$. Let $j$ be some integer such that $j > k+1$. From the recursion \eqref{eq:recursive_derivative_coefficients}, we see $q^{(k+1)}_j$ depends linearly on $q^{(k)}_j$ and $q^{(k)}_{j-1}$, but $j-1>k$, so that both are zero.
\end{proof}

\begin{lemma}\label{lem:diagonal_coefficient}
    We have
    \begin{equation}
        q^{(k)}_k = m^{\underline{k}}.
    \end{equation}
\end{lemma}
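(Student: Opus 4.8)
The plan is to prove this by induction on $k$, mirroring the structure of the proof of Lemma~\ref{lem:zeroth_coefficient} but working along the main diagonal instead of the zeroth column. Whereas the zeroth coefficient was isolated by setting $j=0$ (which annihilates the second term of the recursion), here the diagonal coefficient $q^{(k+1)}_{k+1}$ should be isolated by setting $j=k+1$ and invoking Lemma~\ref{lem:superdiagonal} to kill the first term.

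Concretely, the base case is immediate: $q^{(0)}_0 = 1 = m^{\underline{0}}$, interpreting the empty falling factorial as $1$. For the inductive step, I would assume $q^{(k)}_k = m^{\underline{k}}$ and substitute $j = k+1$ into the coefficient recursion \eqref{eq:recursive_derivative_coefficients}, giving
\begin{equation*}
    q^{(k+1)}_{k+1} = \big(n-k + p(k+1)\big) q^{(k)}_{k+1} + \big( m + k(p-1) - pk \big) q^{(k)}_{k}.
\end{equation*}
By Lemma~\ref{lem:superdiagonal} the factor $q^{(k)}_{k+1}$ vanishes since $k+1 > k$, so the entire first term drops out and only the contribution from $q^{(k)}_k$ survives.

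The step I expect to carry the weight of the argument is the algebraic simplification of the surviving coefficient: one checks that $m + k(p-1) - pk = m + kp - k - pk = m - k$, so the explicit dependence on $p$ cancels completely. This leaves $q^{(k+1)}_{k+1} = (m-k)\, q^{(k)}_k = (m-k)\, m^{\underline{k}} = m^{\underline{k+1}}$, which is precisely the defining recursion of the falling factorial and closes the induction. There is no genuine obstacle here; the only point requiring care is noticing that the $p$-terms telescope away, which is exactly the structural reason the diagonal coefficient depends on $m$ alone and exhibits the clean palindromic symmetry with $q^{(k)}_0 = n^{\underline{k}}$ already anticipated by the $m \leftrightarrow n$ reversal discussed after the Theorem.
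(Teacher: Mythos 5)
Your proposal is correct and follows essentially the same route as the paper: set $j=k+1$ in the recursion \eqref{eq:recursive_derivative_coefficients}, use Lemma~\ref{lem:superdiagonal} to kill the first term, simplify $m+k(p-1)-pk=m-k$, and recognize the falling-factorial recursion with base case $q^{(0)}_0=1$. Your version is in fact slightly cleaner, as it makes the appeal to Lemma~\ref{lem:superdiagonal} explicit where the paper only says ``we only have the second term,'' and it avoids the paper's typographical slip of writing $q^{(k)}_0$ where $q^{(k)}_k$ is meant.
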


\begin{proof}
    We observe that setting $j = k+1$ in the recursion \eqref{eq:recursive_derivative_coefficients}, we only have the second term, such that
    \begin{equation*}
        q^{(k+1)}_{k+1} = (m + k(p-1) - pk)q^{(k)}_0 = (m-k)q^{(k)}_k.
    \end{equation*}
    Using the fact that $q^{(k)}_0=1$, this is nothing but the recursive definition of the falling factorial.
\end{proof}

With these Lemmas in place, we will deduce some global properties of the coefficients.

\begin{theorem}\label{thm:nonzero_coefficients}
    Let $n,m,k \geq 0$ and $p \geq 2$. Then all columns and diagonals eventually vanish. Specifically, let $j \in \mathbb{N}\cup\{0\}$ and $d \in \mathbb{Z}$, then $q^{(k)}_j = 0$ for all $k > n + pj$ and $q^{(k)}_{k-d} = 0$ for all $k > m + pd$. Moreover, all nonzero coefficients are in fact positive.
\end{theorem}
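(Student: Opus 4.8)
The plan is to treat the three assertions in order, since the concluding positivity statement will lean on the two vanishing statements. I would begin with the column claim $q^{(k)}_j = 0$ for $k > n + pj$ and argue by induction on $j$. The base case $j = 0$ is immediate from Lemma~\ref{lem:zeroth_coefficient}: since $q^{(k)}_0 = n^{\underline{k}}$ and $n$ is a nonnegative integer, a zero factor appears in the falling factorial as soon as $k > n$. For the inductive step the decisive observation is that in the recursion \eqref{eq:recursive_derivative_coefficients} the self-coupling coefficient $(n-k+pj)$ multiplying $q^{(k)}_j$ vanishes precisely at $k = n + pj$, while the cross term $q^{(k)}_{j-1}$ is killed by the outer inductive hypothesis once $k > n + p(j-1)$. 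Because $p \geq 2$, the threshold $n + pj$ already exceeds $n + p(j-1)$, so evaluating the recursion at $k = n + pj$ yields $q^{(n+pj+1)}_j = 0$; a short inner induction on $k$ then propagates the vanishing to all $k > n + pj$, since at each later step the first term dies (as $q^{(k)}_j$ is already zero) and the second dies by the outer hypothesis.

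For the diagonal claim $q^{(k)}_{k-d} = 0$ for $k > m + pd$, I would invoke the reversal symmetry recorded just after the derivative-polynomial theorem, namely $q^{(k)}_{k-d}[m,n] = q^{(k)}_d[n,m]$. Applying the column result already proven, but with the roles of $m$ and $n$ interchanged, gives $q^{(k)}_d[n,m] = 0$ for $k > m + pd$, which is exactly the assertion. Equivalently, one could rerun the previous induction verbatim on the diagonal index $d$, taking Lemma~\ref{lem:diagonal_coefficient}, $q^{(k)}_k = m^{\underline{k}}$, as the base case; along a diagonal the self-coupling coefficient works out to $(m - k + pd)$, which vanishes precisely at $k = m + pd$, mirroring the column argument.

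The positivity claim carries the real content, and it is the step I expect to be the main obstacle, because the coefficients in \eqref{eq:recursive_derivative_coefficients} change sign, so a naive nonnegativity induction cannot succeed. I would instead prove by induction on $k$ that every entry of row $k$ is nonnegative, with base case $q^{(0)}_0 = 1$. The crux is to show that each of the two multipliers in the recursion is permitted to be negative only when the entry it multiplies has already vanished. For the first term, $(n-k+pj) < 0$ is equivalent to $k > n + pj$, which is exactly the region where the column result forces $q^{(k)}_j = 0$. For the second term, $(m + k(p-1) - p(j-1)) < 0$ rearranges to $p(j-1) > m + k(p-1)$, which is precisely the diagonal-vanishing condition for $q^{(k)}_{j-1}$. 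Hence a negative multiplier always meets a zero entry, a positive multiplier always meets a nonnegative entry by the inductive hypothesis, and a zero multiplier drops out; in every case both terms are nonnegative, so $q^{(k+1)}_j \geq 0$. A nonnegative coefficient that is not zero must be strictly positive, which gives the claim. The one thing to verify carefully is the algebraic equivalence between these two sign conditions and the two vanishing thresholds, since that alignment is exactly what lets the negative coefficients be quarantined onto zero entries and makes the argument close.
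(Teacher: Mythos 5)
Your proposal is correct, and the interesting divergence from the paper is in the positivity step. The column argument is essentially the paper's own: induction on $j$ starting from Lemma~\ref{lem:zeroth_coefficient}, with the self-coupling coefficient $n-k+pj$ dying at $k=n+pj$ and the cross term killed by the outer hypothesis. For the diagonals, the paper runs what you call the equivalent second route, an induction on $d$ with Lemma~\ref{lem:diagonal_coefficient} as base case, whereas your primary route via the reversal symmetry $q^{(k)}_{k-d}[m,n]=q^{(k)}_d[n,m]$ is a legitimate shortcut; note only that this covers $d\geq 0$, so the $d<0$ part of the claim (which your own positivity step implicitly uses when $j-1>k$) still needs Lemma~\ref{lem:superdiagonal}, exactly as the paper invokes it. For positivity the routes genuinely differ: the paper restricts attention to the trapezoid $\frac{pj-m}{p-1}\leq k\leq n+pj$ and runs a strict-positivity induction there, using that inside this region both multipliers in \eqref{eq:recursive_derivative_coefficients} are nonnegative and the second is at least $p$; you instead prove global nonnegativity of every entry by quarantining negative multipliers onto entries already forced to vanish. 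Your sign computations are right: $(n-k+pj)<0$ iff $k>n+pj$, and $(m+k(p-1)-p(j-1))<0$ iff $k>m+pd$ with $d=k-j+1$, so each negative multiplier indeed meets a zero entry, the induction closes, and ``nonzero implies positive'' follows trivially. Your version is cleaner and sidesteps the boundary bookkeeping that the paper's induction glosses over. What the paper's heavier route buys in exchange is a stronger conclusion: every entry inside the trapezoid is strictly positive, which pins down the exact support of $Q_k(u)$ --- the fact actually consumed downstream in Lemma~\ref{lem:number_of_zero_roots} (degree and multiplicity of the root at $u=0$) and in the sign-change counts of Theorem~\ref{thm:real_rootedness}. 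Your proof establishes the theorem as literally stated, but leaves open whether an interior trapezoid entry might vanish, so a reader wanting those later results would still need that upgrade.
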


\begin{proof}
    We prove the statement that all coefficients are zero for $k > n + pj$ by induction on $j$. For the base case, we apply Lemma~\ref{lem:zeroth_coefficient} and see that for $k > n$, we have $q^{(k)}_0 = 0$. Next, fix some $j \geq 0$ and assume $q^{(k)}_j = 0$ for all $k>n+pj$. The recursion for $q^{(k)}_{j+1}$ therefore reads
    \begin{equation*}
        q^{(k+1)}_{j+1} = \big(n - k + p(j+1)  \big)q^{(k)}_{j+1}.
    \end{equation*}
    We then set $k = n + (j+1)p$, which is strictly larger than $n+pj$, since $p \geq 2$, so that the recursion yields $q^{(k+1)}_{j+1} = 0$. Hence, plugging this back into the recursion, we find $q^{(k+1)}_{j} = 0$ for all $k>n+pj$.

    Next, we prove that diagonals eventually vanish. We do this by investigating $q^{(k)}_{k-d}$ by induction on $d$. Lemma~\ref{lem:superdiagonal} tells us that $q^{(k)}_{k-d} = 0$ for all $d<0$. As our base case $d=0$, we use Lemma~\ref{lem:diagonal_coefficient} and see that $q^{(k)}_k = 0$ for all $k>m$. Next, we assume that $q^{(k)}_{k-d} = 0$ for all $k>m+pd$. We use the recursion to find coefficient $q^{(k+1)}_{k+1-(d+1)} = q^{(k+1)}_{k-d}$, which reads
    \begin{equation*}
        q^{(k+1)}_{k-d} = \big( m + k(p-1) - p(k-d-1) \big) q^{(k)}_{k-d-1},
    \end{equation*}
    We write $q^{(k)}_{k-d-1} = q^{(k)}_{k-(d+1)}$ and simplify to find
    \begin{equation*}
        q^{(k+1)}_{k+1-(d+1)} = \big( m - k + p(d+1) \big) q^{(k)}_{k-(d+1)}.
    \end{equation*}
    Thus, setting $k = m+p(d+1) > m+pd$, we see the coefficient vanishes. Moreover, by the recursion we see that for all $k>m+p(d+1)$, we have $q^{(k+1)}_{k-d}=0$. To finish this part of the proof, we note that $j = k-d$, meaning $d= k-j$. The condition therefore reads $k>m+p(k-j)$, or $pj > m + k(p-1)$.

    Lastly, we again use induction to show that all nonzero coefficients are positive. Therefore, let $k$ and $j$ be such that $pj \leq m + k(p-1)$ and $k \leq n+pj$. It is not hard to see that this is a nonempty set, for instance by isolating $k$ as $ \frac{pj - m}{p-1} \leq k \leq n + pj$. The upper bound grows faster than the lower bound with increasing $k$, since $p \geq 2$. Next, we use induction over $k$. Our base case is $q^{0)}_0 = 1$, which is positive. The column condition tells us that $ n - k + pj \geq 0$, while the diagonal condition tells us $m+k(p-1)-p(j-1) \geq p >0$. Consequently, by the recursion \eqref{eq:recursive_derivative_coefficients}, we see that $q^{(k+1)}_j>0$.
\end{proof}

It should be pointed out that the above Theorem has a special implication for $p=2$, in that there is only one single nonzero coefficient. Of course, we already know this from basic calculus. Theorem~\ref{thm:nonzero_coefficients} can be used to show this in a very roundabout way. Indeed, setting $p=2$, the Theorem tells us that $\frac{k-n}{2} \leq j \leq \frac{k+m}{2}$. The fact that $j$ must be integer yields $\lceil\frac{k-n}{2}\rceil \leq j \leq \lfloor \frac{k+m}{2} \rfloor$. Now, setting $m=0$ and $n=1$, both bounds become the same and we obtain $j = \lfloor \frac{k}{2} \rfloor$. Similarly, if $m=1$ and $n=0$, we obtain $j = \lceil \frac{k}{2} \rceil$.

\subsection{Algebraic values of squigonometric functions}
The fact that all derivatives of squigonometric functions expressed in terms of polynomials has some interesting consequences for the values they take.

\begin{lemma}\label{lem:number_of_zero_roots}
    Let $m,n,k\geq0$ be integers. Then, the polynomial $Q_k(u)$ has degree  $k - \lceil \frac{k-m}{p} \rceil$. Furthermore, it has a root of multiplicity $\lceil \frac{k-n}{p} \rceil$ at $u=0$.
\end{lemma}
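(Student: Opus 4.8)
The plan is to read off both quantities directly from the support of the coefficient sequence, which Theorem~\ref{thm:nonzero_coefficients} determines completely. Writing $Q_k(u) = \sum_{j=0}^k q^{(k)}_j u^j$, the multiplicity of the root at $u=0$ is exactly the smallest index $j$ for which $q^{(k)}_j \neq 0$, while the degree is the largest such index. Thus the entire lemma reduces to identifying the two extremal nonzero indices and performing an elementary floor--ceiling manipulation.

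First I would combine the vanishing and positivity assertions of Theorem~\ref{thm:nonzero_coefficients} into a sharp characterization of the support. The column statement gives $q^{(k)}_j = 0$ whenever $k > n + pj$, i.e.\ whenever $j < \tfrac{k-n}{p}$, and the diagonal statement gives $q^{(k)}_j = 0$ whenever $pj > m + k(p-1)$, i.e.\ whenever $j > \tfrac{m+k(p-1)}{p}$. The positivity clause covers precisely the complementary range, asserting $q^{(k)}_j > 0$ for every integer $j$ with $\tfrac{k-n}{p} \le j \le \tfrac{m+k(p-1)}{p}$. Hence the nonzero coefficients are exactly those with $j$ an integer lying in the interval $[\tfrac{k-n}{p}, \tfrac{m+k(p-1)}{p}]$, whose nonemptiness is already established in the positivity part of that theorem.

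The smallest integer in this interval is $\lceil \tfrac{k-n}{p} \rceil$, which is the multiplicity of the root at $u=0$, matching the claim. The largest integer in the interval is $\lfloor \tfrac{m+k(p-1)}{p} \rfloor$, which is the degree of $Q_k$. To put this into the stated form I would rewrite $\tfrac{m+k(p-1)}{p} = k - \tfrac{k-m}{p}$ and apply the identity $\lfloor -x \rfloor = -\lceil x \rceil$, giving $\lfloor k - \tfrac{k-m}{p} \rfloor = k - \lceil \tfrac{k-m}{p} \rceil$, exactly the asserted degree.

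The only genuine subtlety, rather than a real obstacle, is ensuring the extremal indices are actually attained: the floor of the upper endpoint and the ceiling of the lower endpoint lie inside the interval only because it is nonempty, so I would lean on the nonemptiness argument from Theorem~\ref{thm:nonzero_coefficients} (isolating $k$ between $\tfrac{pj-m}{p-1}$ and $n+pj$) to guarantee that these two boundary indices carry strictly positive, hence nonzero, coefficients. Everything else is the floor--ceiling bookkeeping above.
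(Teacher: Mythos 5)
Your proposal is correct and takes essentially the same route as the paper: both read the minimal and maximal nonzero indices of $q^{(k)}_j$ off Theorem~\ref{thm:nonzero_coefficients}, identifying the multiplicity at $u=0$ as $\lceil \tfrac{k-n}{p}\rceil$ and the degree as the top of the support. The only cosmetic difference is that the paper parametrizes the upper end by the diagonal index $d=k-j$ to get $j_{\max}=k-\lceil\tfrac{k-m}{p}\rceil$ directly, whereas you compute $\lfloor\tfrac{m+k(p-1)}{p}\rfloor$ and convert via $\lfloor k-x\rfloor = k-\lceil x\rceil$; your explicit check that the extremal indices are actually attained is a point the paper leaves implicit.
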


\begin{proof}
    We apply Theorem~\ref{thm:nonzero_coefficients} to see that the lowest power of $u$ occurring in $Q_k(u)$ is the minimal value of $j$ such that $j_\mathrm{min} \geq \frac{k-n}{p}$, so that $j_\mathrm{min} = \lceil \frac{k-n}{p} \rceil$. The highest power of $u$ occurs for $j=k-d$, where $d_\mathrm{min} \geq \frac{k-m}{p}$, which is $d_\mathrm{min} = \lceil \frac{k-m}{p} \rceil$, so that $j_\mathrm{max}= k-d_\mathrm{min} = k-\lceil \frac{k-m}{p}\rceil$.

    Using this argument, we can see that we can factor out a power of $u^{j_\mathrm{min}}$ from $Q_k(u)$, which tells us $u=0$ is a root of multiplicity $\lceil \frac{k-n}{p} \rceil$. When we factor out this root, we end up with a polynomial of degree $k - \lceil \frac{k-n}{p} \rceil - \lceil \frac{k-m}{p} \rceil$ with a nonzero constant coefficient. Thus, all roots of the remaining polynomial are nonzero.
\end{proof}

We present now the following Theorem which establishes real-rootedness of the derivative polynomials. The proof is rather long and technical, but simple in concept: we divide out all the roots of $Q_{k+1}(u)$ at $u=0$ and evaluate at the roots $Q_k(u)$. This produces a sequence of points where $Q_{k+1}(u)$ has an alternating sign. The subtleties arise due to the multitude of possible cases implied by the previous Lemma. As such, we urge the reader to skip the proof on a first reading.

\begin{theorem}\label{thm:real_rootedness}
    Let $m,n,k\geq0$ be integers. Then, the sequence of polynomials $Q_k(u)$ has only real roots. Moreover, all nonzero roots are strictly negative and simple. Finally, $Q_{k+1}(u)$ interlaces $Q_k(u)$, i.e., let $u_0 < u_1 < \ldots,u_{R_{k+1}}$ be the nonzero roots of $Q_{k+1}(u)$ and $v_0 < v_1 < \ldots,v_{R_k}$ be the nonzero roots of $Q_k(u)$, then
    \begin{equation}
        u_0 < v_0 < u_1 < v_1 \ldots < u_{R_{k+1}-1} < v_{R_{k}-1} < 0.
    \end{equation}
\end{theorem}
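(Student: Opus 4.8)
The plan is to argue by induction on $k$, with the engine being the evaluation of the recursion \eqref{eq:derivative_polynomial_recursion} at the nonzero roots of $Q_k$. Assume the claim for $Q_k$, and write its nonzero roots as $v_0 < v_1 < \cdots < v_{N_k-1} < 0$, all simple, where by Lemma~\ref{lem:number_of_zero_roots} there are exactly $N_k = k - \lceil\tfrac{k-m}{p}\rceil - \lceil\tfrac{k-n}{p}\rceil$ of them together with a root of multiplicity $s_k = \lceil\tfrac{k-n}{p}\rceil$ at the origin. Since $Q_k(v_i) = 0$, the recursion collapses at each such root to
\begin{equation*}
    Q_{k+1}(v_i) = p\,v_i(1-v_i)\,Q_k'(v_i).
\end{equation*}
Because every $v_i < 0$, the prefactor $p\,v_i(1-v_i)$ is strictly negative and of constant sign across all $i$, so $\sgn Q_{k+1}(v_i) = -\sgn Q_k'(v_i)$.

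Next I would pin down the alternation of $Q_k'(v_i)$. Factoring $Q_k(u) = u^{s_k}\tilde{Q}_k(u)$ with $\tilde{Q}_k(0)\neq 0$, one has $Q_k'(v_i) = v_i^{s_k}\tilde{Q}_k'(v_i)$, and since the $v_i$ are consecutive simple roots of $\tilde{Q}_k$, which has positive leading coefficient by Theorem~\ref{thm:nonzero_coefficients}, we get $\sgn \tilde{Q}_k'(v_i) = (-1)^{N_k-1-i}$. Hence $\sgn Q_{k+1}(v_i) = (-1)^{s_k+N_k-i}$ alternates with $i$, and by the intermediate value theorem $Q_{k+1}$ has at least one root in each of the $N_k - 1$ open intervals $(v_i, v_{i+1})$. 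This already supplies the interior part of the interlacing.

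It remains to locate the roots of $Q_{k+1}$ outside $[v_0, v_{N_k-1}]$ and to certify that none is overcounted. Here I would divide out the zero root, setting $\tilde{Q}_{k+1}(u) = Q_{k+1}(u)/u^{s_{k+1}}$ with $s_{k+1} = \lceil\tfrac{k+1-n}{p}\rceil$, so that $\tilde{Q}_{k+1}(0) > 0$ and $\tilde{Q}_{k+1}$ carries precisely the $N_{k+1}$ nonzero roots counted by Lemma~\ref{lem:number_of_zero_roots}. Comparing the sign of $\tilde{Q}_{k+1}$ at $v_0$ and at $v_{N_k-1}$ against its behaviour as $u\to-\infty$ (which is $(-1)^{N_{k+1}}$ times a positive quantity) and as $u\to 0^-$ (positive, since the constant term is positive) detects, through sign changes, the roots lying in $(-\infty, v_0)$ and in $(v_{N_k-1}, 0)$.

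The main obstacle is the bookkeeping in this last step. The difference $N_{k+1}-N_k$ equals $+1$, $0$, or $-1$ according to whether neither, exactly one, or both of the congruences $k\equiv m$ and $k\equiv n \pmod p$ hold; correspondingly the number of boundary roots to be placed is two (one on each side), one, or none. In each case the parities of $s_k$, $s_{k+1}$ and $N_{k+1}$ must be matched against the alternation pattern to show that the sign changes produce exactly $N_{k+1}$ roots in total. Since this equals the full nonzero-root count from the Lemma, every root is forced to be simple and strictly negative, and the located roots interlace those of $Q_k$ as claimed. Small base cases in which $Q_k$ has fewer than two nonzero roots, together with the degenerate parameter choices $m=0$ or $n=0$, should be verified separately, but they follow from the same sign-alternation mechanism.
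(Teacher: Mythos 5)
Your proposal is correct, and its skeleton coincides with the paper's: induction on $k$, evaluation of the recursion \eqref{eq:derivative_polynomial_recursion} at the nonzero roots of $Q_k$ (where it collapses to $Q_{k+1}(v_i)=p\,v_i(1-v_i)\,Q_k'(v_i)$), sign alternation of the derivative at consecutive simple roots, and the intermediate value theorem to plant interlacing roots; the paper, like you, also uses positivity of the coefficients (Theorem~\ref{thm:nonzero_coefficients}) to fix the sign of $\tilde{Q}_{k+1}$ near $u=0$. Where you genuinely diverge is the mechanism that certifies the root count is exact. The paper caps the number of negative roots by Descartes' rule of signs (the nonzero coefficients being consecutive and positive, the count is $J_{k+1}-j_{k+1}-2D$ with $D\ge 0$) and then runs a case analysis pitting that upper bound against the IVT lower bound to force $D=0$. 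You instead add a second boundary test, the sign of $\tilde{Q}_{k+1}$ as $u\to-\infty$, namely $(-1)^{N_{k+1}}$, and aim to exhibit as many sign changes as the degree $N_{k+1}$ of $\tilde{Q}_{k+1}$, which by itself rules out complex and multiple roots --- no Descartes needed. This is, if anything, a cleaner and more self-contained finish. Moreover, the bookkeeping you flag as the main obstacle closes, and more uniformly than your three-case description suggests: a sign change occurs in $(v_{N_k-1},0)$ exactly when $s_{k+1}=s_k$, i.e.\ $k\not\equiv n \pmod{p}$, and in $(-\infty,v_0)$ exactly when $\deg Q_{k+1}=\deg Q_k+1$, i.e.\ $k\not\equiv m \pmod{p}$; writing $\epsilon_n,\epsilon_m\in\{0,1\}$ for these two congruence failures, one has $N_{k+1}=N_k-1+\epsilon_n+\epsilon_m$, so the total number $(N_k-1)+\epsilon_n+\epsilon_m$ of detected sign changes equals $N_{k+1}$ identically, with no parity juggling left to do. Two details to make explicit in a full write-up: $Q_k'(v_i)\neq 0$ requires the induction hypothesis that $v_i$ is a simple root (which you have), and the degenerate starts $N_k\in\{0,1\}$ follow from the same boundary analysis with the interior intervals absent.
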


\begin{proof}
    First, we observe that by Theorem~\ref{thm:nonzero_coefficients}, all coefficients of $Q_k(u)$ are positive. As such, there are no positive real roots. Define $j_k = \lceil \frac{k-n}{p} \rceil$ and $J_k = k-\lceil \frac{k-m}{p} \rceil$. From the previous lemma, we know that $Q_k(u)$ has degree $J_k$ and $R_k = J_k - j_k$ nonzero roots. Moreover, by Descartes' rule of signs, $Q_k(u)$ has $J_k - j_k - 2D$ real roots for some integer $D\geq0$. We wish to show that $D=0$.

    Next, we proceed by induction on $k$. As the base case, we can use any $Q_k(u)$ that has two nonzero coefficients. From Theorem~\ref{thm:nonzero_coefficients}, these must be consecutive, thus $Q_k(u) = a x^{j_k} + b x^{j_k+1}$ for some $a,b>0$. Hence, there is a single nonzero root and it is negative and simple. We therefore assume that $Q_k(u)$ has $J_k-j_k$ negative simple roots.

    Let us define $\tilde{Q}_k(u) = u^{-j_k}Q_k(u)$, so that $\tilde{Q}_k(u)$ only has negative simple roots that are equal to the negative simple roots of $Q_k(u)$. We have
    \begin{equation*}
        Q^\prime_k(u) = u^{j_k}\tilde{Q}^\prime_k(u) + j_k u^{j_k-1}\tilde{Q}_k(u).
    \end{equation*}
    Thus, we see that $\tilde{Q}_{k+1}(u)$ therefore satisfies
    \begin{align*}
        u^{j_{k+1}} \tilde{Q}_{k+1}(u) = \big(m-k + (n+k(p-1))u\big)u^{j_k}\tilde{Q}_k(u) + p u^{j_k} (1-u)\tilde{Q}^\prime_k(u)\\
        + j_k u^{j_k-1}(1-u)\tilde{Q}_k(u).
    \end{align*}

    We now evaluate $\tilde{Q}_{k+1}(u)$ at the negative simple roots of $Q_k(u)$. As such, we can also divide by the $u^{j_{k+1}}$ factor on the left-hand side. Let $0 \leq r < R_k$, so that $v_r<0$ and it follows that
    \begin{align*}
        \sgn \tilde{Q}_{k+1}(v_r) &= \sgn p v_r^{j_k - j_{k+1}} (1-v_r)\tilde{Q}^\prime_k(v_r)\\
        &= (-1)^{j_k - j_{k+1}} \sgn \tilde{Q}^\prime_k(v_r).
    \end{align*}
    Now, by the induction hypothesis, all the roots of $\tilde{Q}_k(u)$ are simple, which means that $\tilde{Q}^\prime_k(v_r)$ has an alternating sign as a function of $r$. Moreover, $\tilde{Q}_k(u)$ has only positive coefficients, so that we know $\tilde{Q}^\prime_k(v_{R_k-1})>0$. Furthermore, $\tilde{Q}_{k+1}(u)$ also has only positive coefficients, so that $\tilde{Q}_{k+1}(0)>0$.
    
    We should distinguish two cases: if $j_{k+1} = j_k$, we have $J_k - j_k+1$ sign changes in $\tilde{Q}_{k+1}(u)$; while if $j_{k+1}=j_k$, we observe $J_k - j_k$ sign changes. By continuity, in between each sign change $\tilde{Q}_{k+1}(u)$ has a real root, so that the interlacing property is established. We summarize
    \begin{equation*}
        R_{k+1} \geq \begin{cases}
            R_k & \text{ if } j_{k+1} = j_k,\\
            R_k-1 & \text{ if } j_{k+1} = j_k+1.
        \end{cases}
    \end{equation*}

    Now, since for $\tilde{Q}_k(u)$, we have $R_k = J_k - j_k$ by the induction hypothesis, we find that
    \begin{equation*}
        J_{k+1} - j_{k+1} = \begin{cases}
            R_k + 1 & \text{ if } J_{k+1} = J_k+1 \text{ and } j_{k+1} = j_k,\\
            R_k & \text{ if } J_{k+1} = J_k \text{ and } j_{k+1} = j_k,\\
            R_k & \text{ if } J_{k+1} = J_k+1 \text{ and } j_{k+1} = j_k+1,\\
            R_k-1 & \text{ if } J_{k+1} = J_k \text{ and } j_{k+1} = j_k+1.
        \end{cases}
    \end{equation*}
    Next, applying Descartes' rule of sign changes to $\tilde{Q}_{k+1}(u)$, we see that $R_{k+1} = J_{k+1}-j_{k+1} - 2D$, where $D \geq 0$ is some integer. We now inspect case-by-case what the bounds imply about $D$.

    First, we treat $J_{k+1}-j_{k+1} = R_k$ and note that we can bound both cases where this occurs as $R_{k+1} \geq R_k -1$. But Descartes says $R_{k+1} = R_k - 2D$, so that $D=0$.

    Next, we have $J_{k+1} - j_{k+1} = R_k+1$, where we have the bound $R_{k+1} \geq R_k$. Descartes says $R_{k+1} = R_k +1 - 2D$, so again we find $D=0$.

    Finally, we have $J_{k+1} - j_{k+1} = R_k - 1$, for which we know $R_{k+1} \geq R_{k-1} - 1$. Descartes says $R_{k+1} = R_k - 1 - 2D$, so that for the last time, we have $D=0$.
\end{proof}

\begin{prop}\label{prop:algebraic_values}
    Let $k \geq 0$, then squigonometric functions take on algebraic values at points where the $k$th derivative vanishes. In particular, if $u$ is a root $Q_k(u)$, then
    \begin{subequations}\label{eq:cq_and_sq_algebraic}
    \begin{align}
        \cq t &= \pm\left(1-u \right)^{-\frac{1}{p}},\\
        \sq t &= \pm\left( \frac{u}{u-1} \right)^\frac{1}{p}.
    \end{align}
    \end{subequations}
\end{prop}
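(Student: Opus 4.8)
The plan is to read the values of $\cq t$ and $\sq t$ straight off the defining substitution $u = -\tq^p t$ together with the Pythagorean identity \eqref{eq:pythagorean_identity}, and then to note that these values must be algebraic because each $Q_k$ has integer coefficients. First I would observe that, at any point where $\cq t$ and $\sq t$ are both nonzero and $1 + \tq^p t \neq 0$, the master formula \eqref{eq:symmetric_derivative_polynomial} shows the $k$th derivative of $\cq^m t \cdot \sq^n t$ vanishes exactly when $Q_k(-\tq^p t) = 0$; that is, precisely when $u = -\tq^p t$ is a root of $Q_k$. This identifies the zeros of the derivative with the roots of the polynomial.

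Next I would solve for the function values. Working in the first quadrant (or for even $p$), so that the absolute values in \eqref{eq:pythagorean_identity} may be dropped, we have $u = -\tq^p t = -\sq^p t / \cq^p t$, hence $\sq^p t = -u\,\cq^p t$. Substituting this into $\sq^p t + \cq^p t = 1$ gives $\cq^p t\,(1-u) = 1$, so that $\cq^p t = (1-u)^{-1}$ and therefore $\sq^p t = -u\,(1-u)^{-1} = u/(u-1)$. Extracting real $p$th roots produces exactly the two formulas of \eqref{eq:cq_and_sq_algebraic}, with the $\pm$ absorbing the branch and quadrant ambiguity under the symmetry extensions of $\sq$ and $\cq$. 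Here I would invoke Theorem~\ref{thm:real_rootedness}: every nonzero root satisfies $u < 0$, whence $1 - u > 0$ and $u/(u-1) > 0$, so the arguments of the $p$th roots are genuinely positive and the extracted values real. The degenerate root $u = 0$ simply yields $\cq t = \pm 1$ and $\sq t = 0$, consistent with $\tq t = 0$.

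For the algebraicity claim I would argue that the coefficient recursion \eqref{eq:recursive_derivative_coefficients} has integer coefficients and integer initial datum $q^{(0)}_0 = 1$, so every $Q_k$ is a polynomial over $\mathbb{Z}$. Each root $u$ is therefore an algebraic number, and since $\cq t$ and $\sq t$ are obtained from $u$ by subtraction, reciprocation, and extraction of a $p$th root, they are themselves algebraic. The same conclusion covers the remaining zeros of the derivative, where instead the prefactor $\cq^{m-k} t \cdot \sq^{n-k} t$ vanishes: there either $\cq t = 0$ or $\sq t = 0$, and \eqref{eq:pythagorean_identity} forces the complementary value to equal $\pm 1$, which is trivially algebraic.

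The argument is essentially a direct computation, so I expect no genuine algebraic difficulty. The one point requiring care is the justification of the $\pm$ signs and the reality of the $p$th roots; this is exactly where the sign information of Theorem~\ref{thm:real_rootedness} (all nonzero roots strictly negative) enters, in tandem with the convention of dropping the absolute values in the first quadrant and extending by symmetry. I expect this sign and branch bookkeeping, rather than anything substantive, to be the main obstacle.
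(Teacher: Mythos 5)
Your proposal is correct and follows essentially the same route as the paper: identify the vanishing of the $k$th derivative with $u=-\tq^p t$ being a root of $Q_k$, use the Pythagorean identity (equivalently, the expressions $\cq t=(1+\tq^p t)^{-1/p}$ and $\sq t=\bigl(\tq^p t/(1+\tq^p t)\bigr)^{1/p}$ in the first quadrant) to obtain \eqref{eq:cq_and_sq_algebraic}, and conclude algebraicity from the integrality of the coefficients of $Q_k$. Your additional bookkeeping --- invoking Theorem~\ref{thm:real_rootedness} for reality of the $p$th roots, and treating the root $u=0$ and the case where the prefactor $\cq^{m-k}t\cdot\sq^{n-k}t$ vanishes --- goes slightly beyond the paper's proof but does not change the argument.
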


\begin{proof}
    Observe that if the $k$th derivative vanishes, then $u=-\tq^p t$ is a root of $Q_k(u)$. Of course, $Q_k(u)$ is a polynomial with integer coefficients, so that $u$ is algebraic. Consequently, any squigonometric function can be expressed as an algebraic function of $u$.

    Let us assume without loss of generality that we are working in the first quadrant. We can express the cosquine as $\cq t = (1+\tq^p)^{-\frac{1}{p}}$, and the squine as $\sq t = \left( \frac{\tq^p t}{1 + \tq^p t}\right)^\frac{1}{p}$. Using $u=-\tq^p t$, we find \eqref{eq:cq_and_sq_algebraic}.
\end{proof}

We should point out here that, even though $Q_k(u)$ pretty much always has a root at $u=0$, this does not imply that any derivative vanishes at $k=0$. This has to do with the prefactor $\cq^{m-k} t \sq^{n-k}t$. In particular, the sine carries a factor $ u^\frac{n-k}{p}$, so that if $\frac{n-k}{p}$ is an integer, the roots at $u=0$ are exactly cancelled.

As an example of Proposition~\ref{prop:algebraic_values}, consider the third derivative of the 4-cosquine. The derivative polynomial is $9 u^2 + 6 u$, which has roots at $u = 0$ and $ u = -\frac{2}{3}$. Consequently, the 4-cosquine takes the value of $(\frac{3}{5})^\frac{1}{4}$ at the nontrivial point where $ \frac{\diff^3 }{\diff t^3}\cq_4 t = 0$.

To give a slightly more complicated example, we can consider the fourth derivative of the 6-squine, which has as its derivative polynomial $60u^3 + 425u^2 + 100u$. This polynomial has roots at $0$ and $- \frac{85\pm \sqrt{6265}}{24} $. As such, the squine takes on the values $(\frac{125 \pm \sqrt{6265}}{234})^{\frac{1}{6}}$ at the nontrivial points where $ \frac{\diff^4 }{\diff t^4}\sq_6 t = 0$.

Finally, we consider the general case for some $p\geq 2$ that $n,m> 0$ and consider the first derivative of $f(t) = \cq^m t \sq^n t$. The derivative polynomial is $n+mu$, which has $u=-\frac{n}{m}$ as a root. We therefore find that $\cq t = (\frac{m}{n+m})^\frac{1}{p}$ and $\sq t = (\frac{n}{n+m})^\frac{1}{p}$ where $f^\prime(t) = 0$. This leads to the rather amusing expression that
\begin{equation}
    \cq^m t \sq^n t = \left(\frac{m^m \cdot n^n}{(n+m)^{(n+m)}}\right)^\frac{1}{p},
\end{equation}
where the derivative vanishes.

The combination of the results from this section leads to another curious fact about squigonometric functions: if $p>2$, this number of points where the $k$th derivative vanishes in $(0,\frac{\pi_p}{2})$ tends to infinity as $k$ approaches infinity. This is a very strange property of squigonometric functions that is not shared by trigonometric functions. Even if we consider the cosquine for any $p>2$, then there are a large number of points where the $k$th derivative vanishes if $k$ is large. For the cosine, on the other hand, there are in fact at most two points in $[0,\frac{\pi}{2}]$ where the $k$th derivative vanishes.

\section{Power series and continued fractions}
We now turn to the question of power series and other infinite expressions. Most useful will be the MacLaurin series, the power series around $t=0$. We also consider Taylor series around other points and continued fractions.

\subsection{MacLaurin series}
The expression in terms of squines and cosquines, \eqref{eq:derivative_polynomial_cq_sq_form} will be most useful here. The only thing we need to do is realize that, as $t\to 0$, we have $\cq t \to 1$ and $\sq t \to 0$, so that the derivative of our general function $\cq^n t \cdot \sq^m t$ at $t=0$ can be expressed as $f_k(1,0)$. This limit always exists for $k\geq 0$ if $n,m \geq 0$, i.e., negative powers cannot be created by differentiation. Thus,
\begin{equation}\label{eq:maclaurin_series_coefficients}
    \left. \frac{\diff^k}{\diff t^k} \cq^m t \cdot \sq^n t \right|_{t \to 0} = \sum_{j=0}^k q^{(k)}_j \delta_{0,n-k+pj},
\end{equation}
where we have used the Kronecker delta. The MacLaurin series coefficient of order $k$ is nonzero only if there is an integer $j$ such that $n + pj = k$.

In general, we therefore find
\begin{equation}\label{eq:squine_maclaurin}
    \sq t = \sum_{j=0}^\infty (-1)^j S_j \frac{t^{pj+1}}{(pj+1)!},
\end{equation}
The coefficients $S_j$ are very easily found from the number triangles introduced above and as such are in fact integers. For instance, the first few coefficients for the 4-squine are given by $1, 18, 14364, 70203672,\ldots$.

Note that in a practical setting, we could compute the MacLaurin series coefficients for squine and cosquine simultaneously, as they can both be taken from the same number triangle. This follows from the earlier observation that switching $m \leftrightarrow n$ is equivalent to switching $j$ to $k-j$. For clarity, we include $m$ and $n$ into the notation for the coefficients, we find
\begin{equation}
    S_j =  q^{(pj+1)}_{j}[0,1] = q^{(pj+1)}_{1+(p-1)j}[1,0] .
\end{equation}
Applying the same argument to the cosquine ($m=1$, $n=0$), there is only a nonzero coefficient if there is an integer $k$ such that $k=jp$, i.e.,
\begin{equation}\label{eq:cosquine_maclaurin}
    \cq t =  \sum_{j=0}^\infty (-1)^j C_j \frac{t^{pj}}{(pj)!}.
\end{equation}
Again due to the symmetry of the number triangle, we have
\begin{equation}
    C_j =  q^{(pj)}_{j}[1,0] = q^{(pj)}_{(p-1)j}[0,1].
\end{equation}
The first few coefficients for the 4-cosquine are given by $1, 6, 2268, 7434504,\ldots$

In order to compute the MacLaurin series coefficients in a manner that is more numerically stable, we introduce the scaled coefficients $\alpha^{(k)}_j = \frac{q^{(k)}_j}{k!}$, where we incorporate the factorial into the recursion. It is not too hard to see that these satisfy
\begin{equation}\label{eq:scaled_recursive_coefficients}
    \alpha^{(k+1)}_j = \frac{n-k+pj}{k+1} \alpha^{(k)}_j + \frac{m+k(p-1)-p(j-1)}{k+1} \alpha^{(k)}_{j-1}.
\end{equation}
Since the $q^{(k)}_j$ are all integers, $\alpha^{(k)}_j$ are all rational numbers.

\subsection{Taylor series around other points}

Having access to all derivatives, and not just the MacLaurin series coefficients, we can set up a Taylor series around any point of our choosing. Perhaps the most interesting point to do this for is $t=\frac{\pi_p}{4}$, where $\cq \frac{\pi_p}{4} = \sq \frac{\pi_p}{4} = \frac{1}{\sqrt[p]{2}}$. The coefficients for the Taylor series are then easily determined as
\begin{equation}
    f_k := \frac{1}{k!}\left. \frac{\diff^k}{\diff t^k} \cq^n t \sq^m t \right|_{t = \frac{\pi_p}{4}} = 2^{-\frac{h_k}{p}} \sum_{j=0}^k (-1)^j \alpha^{(k)}_j,
\end{equation}
where $h_k = n+m+k(p-2)$. To reiterate, the coefficients $\alpha^{(k)}_j$ depend on the fixed parameters $n$, $m$, and $p$, asides from $k$ and $j$; we omit them for brevity. The Taylor series is then, of course, given by
\begin{equation}
    \cq^n t \sq^m t = \sum_{k=0}^\infty f_k (t-\tfrac{\pi_p}{4})^k.
\end{equation}

\subsection{Continued fractions and numerical stability}
Recall that the MacLaurin series of squigonometric functions typically have a finite radius of convergence. We have seen that the MacLaurin coefficient of power $t^{n+pj}$ is given by $\frac{F_j}{(n+pj)!}$, where $F_j$ is an integer. These two facts combined show us that the sequence of integers $F_j$ grows roughly as fast as the factorial. We wish to practically compute possibly hundreds of MacLaurin series coefficients, hence, in order to prevent numerical issues, we need to represent the information in $F_j$ differently.

Note:
\begin{equation}
    F_j = p! q_j^{(n+p(j-1))} + \big( m + n(p-1) + p(p-2)(j-1) \big) (p-1)! F_{j-1}.
\end{equation}

One way was already presented in the form of \eqref{eq:scaled_recursive_coefficients}. We now present a second way that removes the growth behaviour entirely. To do so, we first consider writing $\frac{F_j}{(n+pj)!}$ as a product of $j+1$ factors, i.e.,
\begin{equation}
    \frac{F_j}{(n+pj)!} = a_0 \cdot a_1 \cdots a_j,
\end{equation}
where $a_j$ are a sequence of rational numbers. Thus, we have $a_0 = \frac{F_0}{n!}=1$, and in general for $j \geq 1$, we have
\begin{equation}\label{eq:factor_definition}
    a_j = \frac{F_j}{F_{j-1}(n+pj)^{\underline{p}}}.
\end{equation}
In order to find a recursion for the factor coefficients $a_j$, we define each nonzero $q^{(k)}_j$ as the product of factors $\beta_0^{(k)}n!$, $\beta_1^{(k)}(n+p)^{\underline{p}}$, up to $\beta_j^{(k)} (n+pj)^{\underline{p}}$. For $k>n+pj$, we simply fix $\beta^{(k)}_j = \beta^{(k-1)}_j = a_j$. For $k+1\leq n+pj$, we apply the recursion \eqref{eq:recursive_derivative_coefficients} to find
\begin{equation}
    \beta^{(k+1)}_j = \frac{\beta_0^{(k)}\beta_1^{(k)}\cdots\beta_{j-1}^{(k)}}{\beta_0^{(k+1)}\beta_1^{(k+1)}\cdots\beta_{j-1}^{(k+1)}}\left( (n-k+pj)\beta^{(k)}_j + \frac{m  + k(p-1) - p(j-1)}{(n+pj)^{\underline{p}}} \right).
\end{equation}

Practically, these numbers can be computed quite efficiently. The first few factors for the 4-cosquine are given by $1, \frac{1}{4}, \frac{9}{40}, \frac{149}{540}, \ldots$

From the finite radius of convergence, we have
\begin{equation}
    R_p^p = \lim_{j\to\infty} \frac{\left(n+ jp \right)!}{\left(n+(j-1)p\right)!} \frac{F_{j-1}}{F_j} = \lim_{j\to\infty}  \frac{1}{a_j} .
\end{equation}
This allows us to estimate that
\begin{equation}
    a_j \approx \frac{1}{R_p^p}.
\end{equation}
Hence, in the limit, this sequence converges to a constant that is close to unity. In this way, we are not very likely to run into numerical issues, as all our constants are of the same order of magnitude and for large $j$ will be more or less constant. Note that we can also see that
\begin{equation}
    \pi_p = \lim_{j\to\infty} \frac{4\cos( \tfrac{\pi}{p} )}{ a_j^{\frac{1}{p}}}.
\end{equation}
We will actually use this identity as an initial guess when we compute $\pi_p$.

After obtaining the factors $a_j$, we have
\begin{equation}\label{eq:factorial_type_expansion}
    f(t) = t^n \sum_{k=0}^\infty \prod_{j=1}^k \left( - a_j t^p \right).
\end{equation}
We should note that, if we replace $a_j$ by its asymptotic estimate in \eqref{eq:factorial_type_expansion}, we obtain what is essentially a geometric series.

In this particular form, we can also obtain a continued fraction for $f(t)$ very easily by means of Euler's continued fraction identity. We find
\begin{equation}
    f(t) = \cfrac{t^n}{1+ \cfrac{a_1 t^p}{1-a_1 t^p + \cfrac{a_2 t^p}{1 - \dots}}}.
\end{equation}
We can then use \eqref{eq:factor_definition} and apply an equivalence transformation to find the continued fraction in terms of the integer coefficients as
\begin{equation}
    \cq^m t\cdot \sq^n t = \cfrac{F_0 t^n}{n! + \cfrac{ F_1 \,n!\,  t^p}{F_0(n+p)^{\underline{p}} - F_1 t^p + \cfrac{ F_0 F_2 (n+p)^{\underline{p}}   t^p}{F_1(n+2p)^{\underline{p}} - \dots}}}.
\end{equation}
It should be noted that in the case of $p=2$ and $m=0$, $n=1$ or $m=1$ and $n=0$, all integer coefficients are identically 1, leading to the familiar continued fractions for the sine and cosine.

\section{Explicit formulae for MacLaurin coefficients}

The polynomial recursion \eqref{eq:derivative_polynomial_recursion}, or in its coefficient form \eqref{eq:recursive_derivative_coefficients}, is probably the most efficient way to practically compute the derivative polynomials. However, it is possible to provide explicit expressions for all derivative polynomial coefficients, which we shall demonstrate here.

In order to do so, we shall first introduce a matrix formulation of the recursion. Fix some $K \in \mathbb{N}$, which is the highest derivative we consider for the moment. Listing the coefficients $q^{(k)}_j$ in a vector $\mathbf{q}^{(k)}$, the recursion can clearly be cast as $\mathbf{q}^{(k)} = T_k \mathbf{q}^{(k)}$. We define the matrices
\begin{equation}
    S = \begin{pmatrix}
        0 & &\\
        1 & 0 & \\
         & 1 & \ddots
    \end{pmatrix}
\end{equation}
and
\begin{equation}
    D = \begin{pmatrix}
        0 & &  \\
         & 1 & \\
        &  & 2  
    \end{pmatrix}.
\end{equation}
We can see that $S$ is a shift matrix and $D$ is a matrix that is related to differentiation, in particular, $DS$ implements differentiation on the vector space of polynomial coefficients. Note also that the commutator is given by $DS-SD = [D,S] = S$. It can then be checked that $T_k$ is given by
\begin{equation}
    T_k = (n-k)I + \big(m+p+k(p-1))S + pD(I-S).
\end{equation}

Perhaps the most obvious way to split up this matrix is into a constant and a part that multiplies with $k$, i.e., let 
\begin{subequations}
    \begin{align}
        A &= I - (p-1)S,\\
        B &= nI + pD(I-S) + (m+p)S.
    \end{align}
\end{subequations}
We then have
\begin{equation}
    T_k = B-kA.
\end{equation}
The recursion can now be easily expressed as
\begin{equation}\label{eq:matrix_factorials}
\begin{aligned}
    \mathbf{q}^{(k+1)} &= \big(B - kA \big) \big(B - (k-1)A \big) \cdots \big(B - A \big) B\mathbf{q}^{(0)}.
\end{aligned}
\end{equation}
We can express this more succinctly by using left-multiplying product notation, i.e., for $k\geq 0$, we use
\begin{equation}
    \prod_{l=0}^k X_l = \begin{cases}
        I, & \text{ if } l=0,\\
        X_k X_{k-1} X_{k-2}\ldots X_1 X_0, & \text{ otherwise}.
    \end{cases}
\end{equation}
Thus,
\begin{equation}\label{eq:matrix_factorial_beta}
    \mathbf{q}^{(k+1)} = \left(\prod_{l=0}^k T_l \right) \mathbf{q}^{(0)} = \left(\prod_{l=0}^k (B-lA) \right) \mathbf{q}^{(0)}.
\end{equation}
Hence, the derivative polynomial coefficients are related to matrix-valued falling factorials. Such objects have been studied previously, for instance by Tirao \cite{tirao_hypergeometric_matrix} and Schlosser \cite{Schlosser_2018}. Their research has focussed around hypergeometric series. To give a simple example, we can express $(1+z)^B$ analogously to binomial series using matrix factorials. The novelty here seems to be that matrix $A$ is not the identity matrix. Regardless, it shows that there is a connection in general between the derivative polynomials of squigonometric functions and the Stirling numbers.

In order to obtain an explicit expression for any component of $\mathbf{q}^{(k)}$, though, we need to split $T_k$ in another way: by diagonal and subdiagonal. Doing so allows us to prove the following result.

\begin{theorem}\label{thm:explicit_coefficients_binary}
Let $p,k,n,m \in \mathbb{Z}$ with $k \geq 0 $ and $p \geq 2$, then
    \begin{equation}\label{eq:beta_explicit}
        q^{(k)}_j = \sum_{\substack{\mathbf{i} \in \{0,1\}^k \\ |\mathbf{i}|=j }} \prod_{l=0}^{k-1} \left( n-l + i_l\big(m-n+pl\big) + p(1-2i_l)   \sum_{r=0}^{l-1} i_r \right),
    \end{equation}
    where $\mathbf{i} = (i_0,i_1,\ldots,i_k) \in \{0,1\}^k$ and $|\mathbf{i}|=\sum_{r=0}^{k-1} i_r$.
\end{theorem}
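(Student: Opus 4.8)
The plan is to unroll the matrix factorial $\mathbf{q}^{(k)} = \left(\prod_{l=0}^{k-1}(B-lA)\right)\mathbf{q}^{(0)}$, but rather than use the $A$--$B$ split directly, I would follow the hint in the text and split each $T_l$ into a diagonal part and a subdiagonal part. Concretely, acting on the coefficient vector, the recursion \eqref{eq:recursive_derivative_coefficients} says that $q^{(l+1)}_j$ receives a contribution $(n-l+pj)q^{(l)}_j$ from the same index $j$ (the diagonal action) and a contribution $\big(m+l(p-1)-p(j-1)\big)q^{(l)}_{j-1}$ from index $j-1$ (the subdiagonal, index-raising action). Writing $T_l = \Delta_l + \Sigma_l$, where $\Delta_l$ is diagonal with entries $n-l+pj$ and $\Sigma_l$ is strictly subdiagonal with entries $m+l(p-1)-p(j-1)$, the component $q^{(k)}_j$ is obtained by expanding the product $\prod_{l=0}^{k-1}(\Delta_l+\Sigma_l)$ over all ways of choosing, at each factor $l$, either the diagonal term ($i_l=0$) or the subdiagonal term ($i_l=1$).

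The key combinatorial step is to encode each term of this expansion by a binary string $\mathbf{i}\in\{0,1\}^k$: since each $\Sigma_l$ raises the index by one and each $\Delta_l$ leaves it fixed, a path that lands on the $j$th coefficient after $k$ steps must use exactly $j$ subdiagonal moves, i.e.\ $|\mathbf{i}|=\sum_{r=0}^{k-1}i_r = j$. This explains the constraint on the outer sum in \eqref{eq:beta_explicit}. Next I would track, at each step $l$, the index reached so far: before applying factor $l$ the current index equals the number of subdiagonal moves already made, namely $\sum_{r=0}^{l-1}i_r$. Substituting this running index into the diagonal and subdiagonal entries gives, for step $l$, the factor
\begin{equation*}
    (1-i_l)\Big(n-l+p\!\!\sum_{r=0}^{l-1}i_r\Big) + i_l\Big(m+l(p-1)-p\big(\textstyle\sum_{r=0}^{l-1}i_r\big)\Big).
\end{equation*}
The main obstacle is then purely algebraic: showing that this two-case expression collapses into the single unified factor appearing in \eqref{eq:beta_explicit},
\begin{equation*}
    n-l+i_l\big(m-n+pl\big)+p(1-2i_l)\!\!\sum_{r=0}^{l-1}i_r.
\end{equation*}
I would verify this by checking the two values $i_l\in\{0,1\}$ separately: at $i_l=0$ both reduce to $n-l+p\sum_{r<l}i_r$, and at $i_l=1$ both reduce to $m+l(p-1)-p\sum_{r<l}i_r = m-n+pl+(n-l)-p\sum_{r<l}i_r$, matching after noting $p(1-2i_l)=-p$ when $i_l=1$. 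The factor $p(1-2i_l)$ is precisely the device that flips the sign of the running-sum contribution between the diagonal and subdiagonal cases, so a single careful case-check finishes the identification.

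The remaining work is bookkeeping that I would organize as a short induction on $k$ to make the unrolling rigorous. The inductive hypothesis is that $q^{(k)}_j$ equals the stated sum over strings in $\{0,1\}^k$; applying one more factor $T_k=\Delta_k+\Sigma_k$ appends one more coordinate $i_k\in\{0,1\}$ to each string, with the diagonal choice contributing $n-k+pj$ (where $j=|\mathbf{i}|$ is the running sum up to $k$) and the subdiagonal choice contributing $m+k(p-1)-p(j-1)$, exactly reproducing the $l=k$ term of the product for the extended string. Checking the base case $k=0$ against the initial condition $\mathbf{q}^{(0)}$, where the empty product gives $q^{(0)}_0=1$ and all other entries vanish, closes the induction. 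I expect no serious difficulty beyond the index-tracking, so the substantive content really is the single algebraic collapse described above.
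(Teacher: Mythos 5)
Your proposal is correct and follows essentially the same route as the paper's own proof: both split each factor $T_l$ of the matrix product into its diagonal and subdiagonal parts, expand the product over binary choice sequences $\mathbf{i}\in\{0,1\}^k$, and track the single nonzero component (whose index is the running sum $\sum_{r<l}i_r$) to reduce everything to scalar factors. Your explicit verification of the algebraic collapse into the unified factor $n-l+i_l(m-n+pl)+p(1-2i_l)\sum_{r<l}i_r$, and the closing induction on $k$, merely make rigorous what the paper dismisses as ``basic manipulation,'' so the two arguments are the same in substance.
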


\begin{proof}
    Fix some $0 \leq l \leq k-1$ and consider the $l$th factor of the matrix product \eqref{eq:matrix_factorial_beta}, $T_l$. Observe that we have two key facts: first, we always have $\mathbf{q}^{(0)} = \mathbf{e}_0$; second, we note that $T_l$ has only two nonzero diagonals. In particular, we can write $T_l = D_l + S_l$, where $D_l$ is a diagonal matrix and $S_l$ only has nonzero values on the first subdiagonal. We can see that
    \begin{align*}
        D_l &= (n-l)I + pD,\\
        S_l &= \left(\big( m + p + l(p-1) )\big) I - p D \right) S.
    \end{align*}
    We then have $\mathbf{q}^{(k)} = \prod_{l=0}^{k-1} (D_l+S_l) \mathbf{e}_0$.
    
    Observe next, that in expanding the product of matrices, each term consists of products of diagonal matrices and shift matrices. Multiplying from the left with $\mathbf{e}_0$, each term in the expansion therefore produces a vector that has only a single nonzero component.

    Using the binary vector $\mathbf{i}$, we can express the expansion as
    \begin{equation*}
        \prod_{l=0}^{k-1} (D_l+S_l) = \sum_{\mathbf{i}\in \{0,1\}^k} \prod_{l=0}^{k-1} \big( (1-i_l) D_l + i_l S_l  \big).
    \end{equation*}
    Clearly, each term in the sum produces a vector that has a nonzero component only at index $\sum_{l=0}^{k-1} i_l$. Conversely, given some $j$, we can find $q^{(k)}_j$ as the sum over all terms labelled with $|\mathbf{i}|=i_0+i_1+\ldots+i_{k-1} = j$. Thus,
    \begin{equation*}
        q^{(k)}_j = \mathbf{e}_j^T \sum_{\substack{\mathbf{i} \in \{0,1\}^k \\ |\mathbf{i}|=j }} \left(\prod_{l=0}^{k-1} \big( (1-i_l) D_l + i_l S_l  \big) \right) \mathbf{e}_0.
    \end{equation*}
    
    Next, by the associativity of matrix multiplication, each term in the sum can be calculated by consecutive matrix-vector products. Let us call the intermediate result $\mathbf{v}_l$ after $l$ factors have been multiplied, thus $\mathbf{v}_l = \prod_{r=0}^l \big( (1-i_r) D_r + i_r S_r  \big) \mathbf{e}_0$. Due to our splitting, we note that each $\mathbf{v}_l$ has only a single nonzero component. In this way, the whole product for a single fixed $\mathbf{i}$ reduces to a multiplication of scalars only. We can keep track of the index of the nonzero component in the following way. Consider $\mathbf{v}_{l-1}$, then if $i_l = 1$, the index is increased by 1, while if $i_l = 0$ it stays the same. Therefore, we find that after $l$ multiplications, $\mathbf{v}_l$ only has a nonzero component at index $\sum_{r=0}^{l} i_r$.

    Next, we see that there are two types of diagonal multiplications, the identity and $D$. If we multiply by the identity, the result is independent of the index of the nonzero component. If we multiply by $D$, the result is multiplied by the index of the nonzero component. Using these rules, we find \eqref{eq:beta_explicit} after some basic manipulation.
\end{proof}

\subsection{Factor sequences}

Although we have stated the explicit formula in terms of binary sequences, there is in fact a very large number of binary sequences that lead to a zero term in the sum. Indeed, if any factor in a particular product vanishes, the entire term vanishes. To illustrate the point, it is possible to show that there is only one single binary sequence that leads to a nonzero factor sequence for the sine or cosine. We shall give a complete characterisation of binary sequences that lead only to nonzero terms.

First, we introduce the factor sequence $f_l[\mathbf{i}]$, as
\begin{equation}
    f_l[\mathbf{i}] = n-l + i_l (m-n + pl) +p(1-2i_l)\sum_{r=0}^{l-l} i_r.
\end{equation}
Next, we switch to slightly different parametrization of the binary sequence, where we record only the positions of the ones. Suppose there are $E$ ones in a binary sequence of length $k$. We then say that these ones occur at $0 \leq l_0 < l_1 < \ldots < l_{E-1} \leq k-1$. Carefully going through the cases, we can see that
\begin{equation}
    \sum_{r=0}^{l-1} i_r = \begin{cases}
        E , &\text{ if } l > l_{E-1},\\
        \vdots &  \\
        r , &\text{ if } l_{ r - 1 } <  l \leq l_{r},\\
        \vdots & \\
        0 , & \text{ if } l \leq  l_0.
        \end{cases}
\end{equation}
These cases also carry over to $f_l[\mathbf{i}]$, which are further compounded by another $E$ cases for when $l=l_r$. We find
\begin{equation}\label{eq:factor_split_cases}
    f_l[\mathbf{i}] = \begin{cases}
        n + p E - l, &\text{ if } l > l_{E-1},\\
        \vdots &  \\
        m + l(p-1) - pr , & \text{ if } l =  l_r,\\
        n + pr -l   , &\text{ if } l_{ r-1 } <  l < l_r,\\
        \vdots & \\
        n-l , & \text{ if } l <  l_0.
        \end{cases}
\end{equation}

This splitting of cases based on the location of the ones allows us to provide a complete characterisation of the binary sequences that lead to nonzero factors. In fact, it is simply a matter of isolating $l$ and negating the conditions.

\begin{prop}\label{thm:nonzero_factor_characterisation}
    Let $p$ and $Z$ be integers with $p\geq2$ and $E\geq 0$. Let $n$ and $m$ be integers with at least one of them nonzero. Let $k = n + pE$ and $j = E$.

    A binary sequence produces nonzero factors if and only if the following two conditions are met. For all $r=0,1,\ldots,Z-1$:
    \begin{subequations}
    \begin{equation}\label{eq:nonzero_equality_condition}
        m + l_r(p-1) \neq pr.
    \end{equation}
    For all $r=1,1,\ldots,Z-1$:
    \begin{equation}\label{eq:nonzero_inequality_condition}
        l_r \leq n + pr \quad \text{ or } \quad l_{r-1} \geq n + pr.
    \end{equation}
    If $m \geq 0$, we also require
    \begin{equation}\label{eq:nonzero_inequality_zero}
        l_0 \leq n.
    \end{equation}
    \end{subequations}
\end{prop}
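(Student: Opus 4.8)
The plan is to use the case decomposition \eqref{eq:factor_split_cases} directly. Since a product of real numbers is nonzero exactly when each factor is nonzero, a binary sequence $\mathbf{i}$ with $|\mathbf{i}| = E$ produces nonzero factors if and only if $f_l[\mathbf{i}] \neq 0$ for every $l \in \{0, 1, \ldots, k-1\}$. The formula \eqref{eq:factor_split_cases}, obtained once the positions $0 \le l_0 < l_1 < \cdots < l_{E-1} \le k-1$ of the ones are fixed, partitions the index set into four kinds of blocks --- the tail $l > l_{E-1}$, the diagonal indices $l = l_r$, the interior gaps $l_{r-1} < l < l_r$, and the leading block $l < l_0$ --- and I would simply demand nonvanishing on each block in turn. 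Fixing $k = n + pE$ is exactly what makes the tail block transparent, so I would record that specialization at the outset.

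First I would dispose of the tail. For $l > l_{E-1}$ we have $f_l[\mathbf{i}] = n + pE - l = k - l$, and as $l$ runs from $l_{E-1}+1$ up to $k-1$ this takes the values $k - l \in \{1, 2, \ldots\}$, all strictly positive. Hence the tail never vanishes and imposes no condition; this is why only the other three blocks appear in the statement. Next, at the diagonal indices $l = l_r$ we have $f_l[\mathbf{i}] = m + l_r(p-1) - pr$, which is zero precisely when $m + l_r(p-1) = pr$. Forbidding this for every $r = 0, 1, \ldots, E-1$ is literally condition \eqref{eq:nonzero_equality_condition}, so these factors are nonzero if and only if \eqref{eq:nonzero_equality_condition} holds.

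For the interior gaps I would argue by negation. On $l_{r-1} < l < l_r$ (with $1 \le r \le E-1$) we have $f_l[\mathbf{i}] = n + pr - l$, which vanishes iff $l = n + pr$. Such an $l$ lies in the open integer interval $(l_{r-1}, l_r)$ iff $l_{r-1} < n + pr < l_r$; negating this double inequality gives $l_{r-1} \ge n + pr$ or $l_r \le n + pr$, which is exactly \eqref{eq:nonzero_inequality_condition}. The leading block $l < l_0$ is the $r = 0$ version of the same computation, but with left endpoint $l = 0$ rather than a previous one: here $f_l[\mathbf{i}] = n - l$, vanishing iff $l = n$, and such an $l$ is an admissible index iff $0 \le n < l_0$. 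Forbidding it yields the requirement $l_0 \le n$ recorded in \eqref{eq:nonzero_inequality_zero}.

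I expect the main obstacle to be precisely this leading block: everything else is a mechanical negation of an equality or an interval membership, but the leading block forces one to track whether $l = n$ is an admissible index at all, which is the role of the sign hypothesis in \eqref{eq:nonzero_inequality_zero}. I would check carefully that the stated sign condition delimits exactly the regime in which $l = n$ can land inside $\{0, \ldots, k-1\}$, together with the degenerate configurations ($E = 0$, $l_0 = 0$, and empty interior gaps) in which the corresponding block is vacuous and imposes nothing. Once this block-by-block analysis is in place, both directions of the ``if and only if'' come for free, since each displayed condition is by construction the exact negation of ``some factor in that block is zero,'' and the conjunction over all blocks is ``no factor anywhere is zero.''
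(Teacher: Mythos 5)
Your proposal is correct and coincides with the paper's own proof: the paper justifies this proposition only by the remark immediately preceding it, that the characterisation ``is simply a matter of isolating $l$ and negating the conditions'' in \eqref{eq:factor_split_cases}, and that is exactly your block-by-block negation, including the observation that fixing $k=n+pE$ makes the tail factors $k-l>0$ impose no condition.

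One caveat that your own analysis exposes: the leading-block factors are $n-l$, so a zero occurs there iff $0\le n<l_0$, and hence the guard in \eqref{eq:nonzero_inequality_zero} should be $n\ge 0$, not $m\ge 0$ as the proposition states. As literally written the statement is false: take $n=-1$, $m=1$, $p=2$, $E=1$, so $k=1$ and the only sequence is $l_0=0$, whose single factor is $m+l_0(p-1)-0=1\neq 0$, yet since $m\ge 0$ condition \eqref{eq:nonzero_inequality_zero} would demand $l_0\le n=-1$ and wrongly exclude it. What you have proved is the corrected version (guard $n\ge 0$), so rather than asserting that your conclusion ``is recorded in \eqref{eq:nonzero_inequality_zero},'' you should state the correction explicitly; with that emendation your argument is complete, and the degenerate configurations you flag ($E=0$, $l_0=0$, empty gaps) are indeed vacuous exactly as you suspect.
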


\subsection{Rethinking the factorial}
To summarize the above discussion, we present here the following alternative to Theorem~\ref{thm:explicit_coefficients_binary}.

\begin{cor}\label{cor:explicit_factorial_form}
    Let $n,m \in \mathbb{Z}$ and consider the function $f(t) = \cq^n t \sq^m t$. Then, $f(t)$ has a MacLaurin series given by $f(t) = f_n t^n + f_{n+p} t^{n+p} + f_{n+2p} t^{n+2p}+\ldots$. Let $k = n + pj$, the coefficients are given by
    \begin{equation}\label{eq:explicit_factorial_form}
    f_k = (-1)^j\sum_{\mathcal{L}} \frac{ m^{\underline{l_0}}   }{ k^{\underline{l_{j-1}+1}}  } \prod_{r=0}^{j-1}\big( m + l_r(p-1) - pr \big) \prod_{l=l_{r-1}+1}^{l_r - 1} (n + pr -l),
\end{equation}
where $\mathcal{L} = \{0 \leq l_0 < \ldots < l_{j-1} \leq k-1\}$ and we define $l_{-1} = l_0 - 2$.
\end{cor}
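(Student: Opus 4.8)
The plan is to read the MacLaurin coefficients straight off the derivatives already computed and then feed in the explicit binary-sum formula of Theorem~\ref{thm:explicit_coefficients_binary}, collapsing the resulting products by means of the case split \eqref{eq:factor_split_cases}. Taylor's theorem gives $f_k = \frac{1}{k!}\left.\frac{\diff^k}{\diff t^k}f\right|_{t=0}$, and by \eqref{eq:derivative_polynomial_cq_sq_form} only the single term whose squine exponent vanishes survives as $t\to0$. Hence $f_k=(-1)^j q^{(k)}_j/k!$, and a nonzero coefficient occurs exactly when $k$ equals the squine power plus a multiple of $p$ (so $k=m+pj$ here, since the prototype is $\cq^n t\,\sq^m t$). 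This already explains the overall sign $(-1)^j$ and the arithmetic progression on which the series is supported, and reduces the whole statement to rewriting $q^{(k)}_j/k!$ in the advertised product form.

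Next I would substitute \eqref{eq:beta_explicit} and reparametrize each weight-$j$ string $\mathbf{i}$ by the increasing positions $l_0<l_1<\cdots<l_{j-1}$ of its ones, so that the sum over $\mathbf{i}$ with $|\mathbf{i}|=j$ becomes precisely the sum over $\mathcal{L}$. For a fixed such string, \eqref{eq:factor_split_cases} already sorts every factor $f_l[\mathbf{i}]$ into four types: a head run $l<l_0$, the factor $m+l_r(p-1)-pr$ sitting at each one $l=l_r$, the interior runs $l_{r-1}<l<l_r$ contributing $\prod(n+pr-l)$, and a tail run $l>l_{j-1}$. Since one must pass from the convention of Theorem~\ref{thm:explicit_coefficients_binary} to that of the Corollary, I would invoke the coefficient-reversal symmetry recorded after the first theorem (swapping $m\leftrightarrow n$ is the same as sending $j\mapsto k-j$), which is exactly what interchanges the roles of head/tail runs and delivers the numerator falling factorial in the squine power.

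The key simplification is arithmetic. Because $k=m+pj$, the tail factors are $n+pj-l=k-l$ (in the relabeled convention), whose product over $l=l_{j-1}+1,\dots,k-1$ is $(k-l_{j-1}-1)!$; dividing by $k!$ collapses this to $1/k^{\underline{l_{j-1}+1}}$, which is the denominator of \eqref{eq:explicit_factorial_form} since $k^{\underline{l_{j-1}+1}}=k(k-1)\cdots(k-l_{j-1})$ has exactly $l_{j-1}+1$ factors. Likewise the head run is a falling factorial, and the purpose of the convention $l_{-1}=l_0-2$ is precisely to let the $r=0$ interior product supply the last missing factor so that the head contribution packages cleanly into the single term $m^{\underline{l_0}}$. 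Assembling the numerator falling factorial, the product of the ``at a one'' factors over $r=0,\dots,j-1$, and the interior products then reproduces \eqref{eq:explicit_factorial_form}.

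I expect the main obstacle to be exactly this boundary bookkeeping: folding the head run of zeros together with the $r=0$ interior run into one falling factorial while keeping the $r$-indexing of the interior products consistent, and verifying the degenerate cases — $j=0$ (no ones, so $f_k$ is the bare leading coefficient), $l_0=0$ (empty head run), and the way $l_{-1}=l_0-2$ interacts with the empty-product conventions. One must also track the $m\leftrightarrow n$ relabeling carefully, since \eqref{eq:factor_split_cases} is stated for $\cq^m t\,\sq^n t$ whereas the Corollary's prototype is $\cq^n t\,\sq^m t$; this is where small discrepancies in which power appears in the numerator versus the ``at a one'' factor are most likely to creep in, and a short numerical check against the known squine and cosquine coefficients would be the safest way to pin down the correct placement. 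Everything else is a direct substitution of \eqref{eq:derivative_polynomial_cq_sq_form}, Theorem~\ref{thm:explicit_coefficients_binary}, and \eqref{eq:factor_split_cases}.
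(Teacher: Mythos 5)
Your skeleton is exactly the paper's: Corollary~\ref{cor:explicit_factorial_form} is presented there without a separate proof, as a summary of Theorem~\ref{thm:explicit_coefficients_binary} combined with the factor-sequence case split \eqref{eq:factor_split_cases}, which is precisely your plan. Your steps --- reading off $f_k = (-1)^j q^{(k)}_j/k!$ with support on the arithmetic progression via \eqref{eq:derivative_polynomial_cq_sq_form}, reparametrizing weight-$j$ binary strings by the positions $l_0<\cdots<l_{j-1}$ of their ones, and collapsing the tail run $\prod_{l=l_{j-1}+1}^{k-1}(k-l)=(k-l_{j-1}-1)!$ against $k!$ into $1/k^{\underline{l_{j-1}+1}}$ --- are all correct and constitute the substance of the argument.

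The gap is the head-run packaging, and it is fatal to the final assembly as you describe it. By \eqref{eq:factor_split_cases} the head run is $\prod_{l=0}^{l_0-1}(n-l)=n^{\underline{l_0}}$: a falling factorial in the \emph{squine} exponent, the same letter that governs the support $k=n+pj$, not in the cosquine exponent. Moreover, with $l_{-1}=l_0-2$ the $r=0$ interior product is the single factor $(n-l_0+1)$, which is already the last factor of $n^{\underline{l_0}}$; the only consistent packagings are therefore numerator $n^{\underline{l_0}}$ with an empty $r=0$ run (i.e., $l_{-1}=l_0-1$), or numerator $n^{\underline{l_0-1}}$ with $l_{-1}=l_0-2$. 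The printed combination --- numerator $m^{\underline{l_0}}$ together with $l_{-1}=l_0-2$ --- cannot be derived because it is false in general, and your own proposed numerical check exposes this: for the squine ($n=1$, $m=0$, $p=4$, $j=1$, $k=5$) every numerator $0^{\underline{l_0}}$ vanishes except at $l_0=0$, whose at-one factor $m+l_0(p-1)$ is $0$, so \eqref{eq:explicit_factorial_form} returns $s_5=0$, whereas $s_5=-3/20$; the corrected numerator $n^{\underline{l_0}}$ forces $l_0=1$ and returns $-(p-1)/5^{\underline{2}}=-3/20$, in agreement with the paper's own squine specialization, which carries no $m$-falling factorial at all. (The printed form happens to survive the cosquine test only because $1^{\underline{l_0}}(1-l_0)=0^{\underline{l_0}}$ for every $l_0$ when $(m,n)=(1,0)$.) So carry out your derivation, but state and prove the corrected formula rather than the printed one. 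Note also that passing between the prototype of Theorem~\ref{thm:explicit_coefficients_binary} and that of the Corollary is a pure relabeling of letters (the Corollary's ``$\cq^n t\, \sq^m t$'' is itself inconsistent with its stated support $k=n+pj$ and with its own squine/cosquine examples); it is not an application of the reversal symmetry $j\mapsto k-j$, which would instead take you to the opposite end of the number triangle and is not needed here.
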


We stress here that $n$ and $m$ are allowed to be any integers, including negative numbers. As such, Corollary~\ref{cor:explicit_factorial_form} provides MacLaurin coefficients for any squigonometric (or trigonometric) function. For completeness, we set $m=1$ and $n =0$ to obtain the series coefficients for the cosquine as
\begin{equation}
    c_k = (-1)^j\sum_{0 < l_1 < \ldots < l_{j-1} \leq k-1} \frac{1 }{k^{\underline{l_{j-1}+1}}} \prod_{r=1}^{j-1}\big( 1+ l_j(p-1) - pr  \big) \prod_{l=l_{r-1}+1}^{l_r - 1} (pr -l),
\end{equation}
where $k=jp$. Note that we also included the fact that $l_0 = 0$. Setting $n=1$ and $m=0$, we obtain the series coefficients for the squine,
\begin{equation}
    s_k = (-1)^j \sum_{1 < l_1 < \ldots < l_{j-1} \leq k-1}  \frac{p-1}{k^{\underline{l_{j-1}+1}}} \prod_{r=1}^{j-1}\big( l_r(p-1) - pr \big) \prod_{l=l_{r-1}+1}^{l_r - 1} (1 + pr -l),
\end{equation}
where $k=1+pj$. Here we included the fact that $l_0=1$.

It is interesting to note that \eqref{eq:explicit_factorial_form} contains objects that seem to generalize the factorial beyond just rising and falling. Indeed, the two products both contain elements that are both rising and falling simultaneously, as $l_r$ is a strictly increasing sequence of integers.

\subsection{Bounds on the number of nonzero factor sequences}
Note that the conditions on $k$ and $j$ from Theorem~\ref{thm:nonzero_factor_characterisation} correspond exactly to the MacLaurin series coefficients. Thus, we are able to completely characterise the binary sequences that produce nonzero factors for the MacLaurin series. Next, we ask how many of them there are, at least roughly. This seems to be a rather difficult counting problem, so we will not be solving it here. For instance, for the 4-cosquine ($n=0$, $m=1$, $p=4$), we have for $j = 1,2,3,4,5,6$ that the number of sequences is $1,3,15,91,611,4373$. This seems to be growing exponentially at least. 

To get a handle on this number, we will construct some relatively simple bounds. Rather trivially, it being a subset of all binary sequences of length $k$, we may say the number is bounded from above by $2^k = 2^{n+pj}$, which at least shows an exponential bound. Of course, a slightly more sophisticated bound is $\binom{n+pj}{j}$, which yields $1,5,36,286,2380,20349,\ldots$, using again the 4-cosquine as an example. We are mostly interested in a lower bound, which is constructed in the following lemma.

\begin{lemma}
    Let $n$ and $m$ be nonzero integers such that $n+m>0$. The number of binary sequences from Theorem~\ref{thm:nonzero_factor_characterisation} that produce nonzero factors is bounded from below by $(n+1)(p-1)^{j-1}$.
\end{lemma}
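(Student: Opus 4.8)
The plan is to prove the bound constructively, by exhibiting an explicit family of binary sequences that provably satisfy all three conditions of Proposition~\ref{thm:nonzero_factor_characterisation} and whose cardinality is at least $(n+1)(p-1)^{j-1}$. Recall that a sequence with $j$ ones is encoded by its ascending positions $0 \le l_0 < l_1 < \cdots < l_{j-1} \le k-1$, where $k = n + pj$. We may assume $n \ge 0$, since otherwise $(n+1)(p-1)^{j-1} \le 0$ and nothing needs proving. I would first treat the principal case $m > 0$, where the three conditions are easiest to control, and then indicate the extra work required when $m < 0$.

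The construction stacks the positions into disjoint bands. Let $l_0$ range over $\{0, 1, \ldots, n\}$, which supplies $n+1$ choices, and for each $r = 1, \ldots, j-1$ confine $l_r$ to the block $B_r = \{\,n + p(r-1) + 1, \ldots, n + pr\,\}$ of $p$ consecutive integers. Since $\max\{0,\ldots,n\} = n < n+1 = \min B_1$ and $\max B_{r-1} + 1 = \min B_r$, any selection of one position from each band is automatically strictly increasing, the per-band choices are mutually independent, and the total count is the product of the per-band counts. I would then check the conditions band by band: condition~\eqref{eq:nonzero_inequality_zero} reads $l_0 \le n$, true by construction; condition~\eqref{eq:nonzero_inequality_condition} holds because $l_r \le \max B_r = n + pr$ selects its first disjunct for every $r \ge 1$; and for condition~\eqref{eq:nonzero_equality_condition} I would note that $l \mapsto m + l(p-1)$ is strictly increasing, hence injective, so at most one $l$ in $B_r$ solves $m + l(p-1) = pr$, leaving at least $p-1$ admissible values per band. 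At $r=0$ the forbidden value is $l_0 = -m/(p-1) < 0$ because $m > 0$, so it never meets $\{0,\ldots,n\}$ and all $n+1$ choices of $l_0$ survive. Multiplying gives at least $(n+1)(p-1)^{j-1}$ distinct valid sequences, with the base case $j=1$ handled directly as $n+1 = (n+1)(p-1)^{0}$.

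The remaining work, which I expect to be the main obstacle, is the sign case $m < 0$ (where necessarily $n > |m| > 0$). Here condition~\eqref{eq:nonzero_inequality_zero} is inactive, granting $l_0$ extra freedom, but condition~\eqref{eq:nonzero_equality_condition} at $r=0$ now forbids the genuinely positive value $l_0 = -m/(p-1)$, which may land inside $\{0,\ldots,n\}$ and cost one choice. The delicate point is to recover that choice without violating strict monotonicity against $B_1$: a naive enlargement of the range of $l_0$ forces $B_1$ to shrink, and a careful bookkeeping shows this does not immediately reach the target. I expect the clean resolution to be a case split on the integrality of $-m/(p-1)$ — when it is non-integral the principal argument applies verbatim, and when it is an integer in range one redistributes by shifting the entire band structure by one. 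This accounting is the crux; the monotonicity, the independence of the band choices, and the injectivity argument underpinning condition~\eqref{eq:nonzero_equality_condition} are otherwise routine.
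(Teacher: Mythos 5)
Your proposal takes essentially the same route as the paper's own proof: the paper likewise confines the $r$th one to the $p$ positions $n+p(r-1)+1 \le l_r \le n+pr$, discounts one position per block for the equality condition \eqref{eq:nonzero_equality_condition}, gives $l_0$ the $n+1$ values $\{0,\dots,n\}$, and multiplies. Your write-up of the principal case $m>0$ is in fact more careful than the paper's: the disjointness and automatic monotonicity of the blocks, the independence of the per-block choices, the injectivity of $l\mapsto m+l(p-1)$, and the observation that the forbidden value $-m/(p-1)$ is negative when $m>0$ are all left implicit there.

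The case $m<0$ that you isolate as the crux is a genuine subtlety, but it is not a deficiency of your proposal relative to the paper, because the paper does not resolve it either: its proof simply asserts $n+1$ possibilities for $l_0$, tacitly treating the $r=0$ instance of \eqref{eq:nonzero_equality_condition} as vacuous, which is only true for $m>0$. When $m<0$ and $(p-1)\mid m$, the forbidden value $-m/(p-1)$ does lie in $\{1,\dots,n-1\}$ (since $0<-m\le n-1$), so the banded count drops to $n(p-1)^{j-1}$; and your accounting is accurate that enlarging $l_0$'s range to include $n+1$ shrinks the first block and recovers only $(p-2)(p-1)^{j-2}$ sequences, a shortfall of $(p-1)^{j-2}$. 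Closing this case requires the extra freedom that \eqref{eq:nonzero_inequality_zero} is inactive for $m<0$ (so $l_0$ may exceed $n$), combined with, for instance, the observation that $(p-1)\mid m$ makes the equality condition vacuous in every block whose index $r$ is not divisible by $p-1$, so those blocks contribute $p$ rather than $p-1$ choices. In short: same approach, correct on every case the published proof actually covers, and the gap you flag is a gap in the published proof as well.
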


\begin{proof}
    Let $1 \leq r \leq j-1$, and consider the one labelled $r$. We observe $l_{r-1}+1 \leq l_r \leq n+pr$. Hence, the range the one can occupy is certainly larger than $n+(r-1)p + 1 \leq l_r \leq n + pr$, which are $p$ positions. Furthermore, we can naively imagine the equality constraint simply takes away one possibility. Therefore, if we estimate that we have $p-1$ choices for $l_r$, we are certainly underestimating the actual number of possibilities.

    Next, we consider $l_0$ and distinguish several cases. If $n>0$, then we have $n+1$ possibilities. If, on the other hand, $n=0$, then $l_0 = 0$ is the only possibility. In both cases, we thus have $n+1$ possibilities.

    Multiplying everything together, we find there are $(n+1)(p-1)^{j-1}$ total possibilities.
\end{proof}

Applying the lower bound again to the 4-cosquine, we obtain $1,3,9,27,81,234$. Comparing this, we see that the number of nonzero factor binary sequences definitely grows much faster. However, what is important to note here is that the lower bound increases exponentially. As such, if we want to practically compute the MacLaurin series coefficients, the recursion \eqref{eq:recursive_derivative_coefficients} is the way to go.

\section{Computing squigonometric functions}
We will now propose a method to compute the values of squigonometric functions over an interval up to a given tolerance, say $\varepsilon>0$. The only method available up until now, as far as we know, was to compute the value of the inverse functions and invert those. This usually involves numerical quadrature or the evaluation of a hypergeometric series and root finding.

However, with the above theory, it becomes possible to evaluate squine and cosquine with MacLaurin and Taylor series, which is potentially much less computationally intensive, or at the very least conceptually easier. Ultimately, after some preparatory work has been done, the squine and cosquine can be evaluated as a (piecewise) high-degree polynomial.

\begin{figure}
    \centering
    \includegraphics[width=0.5\linewidth]{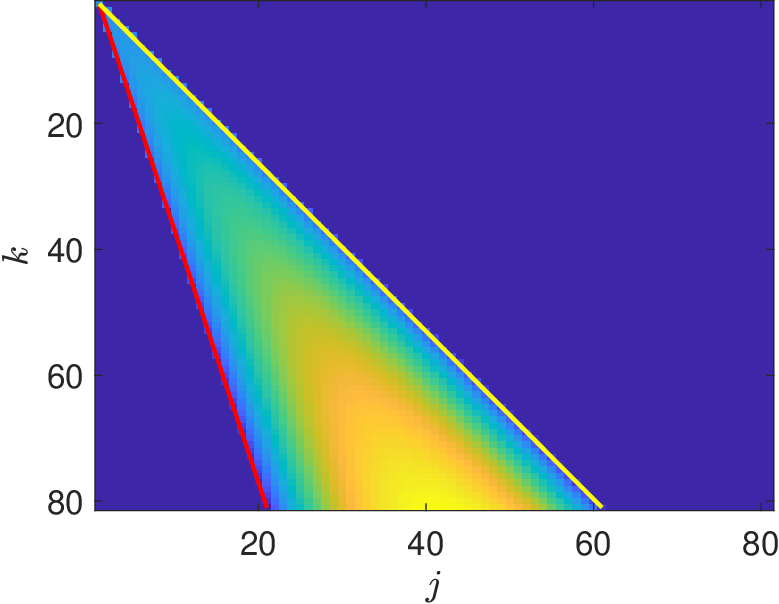}
    \caption{Plot of $\ln|q^{(k)}_j|$ for $n=0$, $m=1$ and $p=4$. The red line indicates the location of the MacLaurin coefficients of the cosquine, and the yellow line indicates the location of the squine MacLaurin coefficients.}
    \label{fig:log_abs_beta}
\end{figure}

\subsection{Choice of interval}
We should note that the interval $[0,1]$ is sufficient to evaluate the squigonometric functions over $\mathbb{R}$. Indeed, by symmetry, we only need the interval $[0,\frac{\pi_p}{4}]$, while we know $\pi_p<4$. As such, if we know the squine and cosquine to sufficient accuracy on $[0,1]$, symmetry and periodicity allows us to extend this to all of $\mathbb{R}$.

\subsection{Single polynomial or piecewise?}
One approach is to simply determine a MacLaurin series that produces a sufficiently accurate approximation across the entire interval $[0,1]$. Once the series coefficients have been computed, these can be stored for later use, requiring only a polynomial evaluation to compute the squine or cosquine. Considering that our prototypical squigonometric function $\cq^m t \cdot \sq^n t$ has only nonzero coefficients for $k = n + pj$, where $j = 0,1,\ldots$, we can store only the nonzero coefficients and use a specialized version of Horner's method for the evaluation.

An alternative to this is to split up $[0,1]$ into subintervals and use a piecewise approximation of lower order. We can daisy-chain these together, starting from $t=0$. We can compute the squigonometric functions to sufficient accuracy to find the Taylor coefficients of the next interval. However, doing so introduces a lot of nonzero Taylor coefficients, while it also necessitates computing all $q^{(k)}_j$. As we will explain, this is not needed for the MacLaurin series. As such, it is probably more efficient to use MacLaurin series.

\subsection{Algorithmic optimization}

If we take a close look at the recursion \eqref{eq:recursive_derivative_coefficients}, we should notice that, in order to compute a particular coefficient $q^{(k)}_j$, we need only values $q^{(k-1)}_j$ and $q^{(k-1)}_{j-1}$. However, we know that MacLaurin coefficients correspond to $k = n + pj$ for $j=0,1,\ldots$ If we know in advance the highest-order coefficient we want to know, say $K = n + pJ$, then we only need a single array to work with.

Indeed, all coefficients with $j > n + pJ$ can be ignored. The evaluation in going from $k$ to $k+1$ can be done from right to left, so that we can update in place. This also means we do not even need a separate array to store the MacLaurin coefficients.

The number of nonzero coefficients we need can be estimated by the finite radius of converge. From the ratio test, we get the estimate $f_k \approx \frac{f_0}{R_p^k} = \frac{1}{R_p^k}$. Working over the interval $[0,1]$, we see that for some given tolerance $\varepsilon$, we want $f_K \approx \varepsilon$. Thus, we can estimate $J \approx - \frac{\ln \varepsilon}{p\ln R_p} - \frac{n}{p} < - \frac{\ln \varepsilon}{p\ln R_p}$, so that we can use
\begin{equation}\label{eq:max_terms_accuracy_estimate}
    J  = \left\lceil - \frac{\ln \varepsilon}{p\ln R_p} \right\rceil.
\end{equation}
For $p=4$ and double precision, $\varepsilon = 2^{-53}$, this yields approximately 32 terms.

We present here a short overview of the resulting algorithm, see Algorithm~\ref{alg:inplace_maclaurin}. In order to compute $\cq^m t \cdot \sq^n t $ on the entire real line, we simply run the algorithm twice, the second time with $m$ and $n$ switched.

\begin{algorithm}[H]
    \SetAlgoLined
    \KwData{$m,n\in\mathbb{Z}$, $J\in\mathbb{N}$}
    \KwResult{$\mathbf{f}$: MacLaurin coefficients of $\cq^m t \cdot \sq^n t$}
    Set $\mathbf{f}$ to be a vector of zeros with length $J+1$\;
    $f_0 \gets 1$\;
    $K \gets n + pJ$\;
    \For{$k=0$ : $(K-1)$}{
        $j_{k+1} \gets \max\{ \lceil \frac{k+1-n}{p} \rceil$\,0 \} \;
        $J_{k+1} \gets \min\{k+1 - \lceil \frac{k+1-m}{p} \rceil$\,J\} \;
        \For{$j=J$ : $-1$ : $(j_{k+1}+1)$}{
            $f_j \gets \frac{n-k+pj}{k+1}f_j + \frac{m+k(p-1)-p(j-1)}{k+1}f_{j-1}$\;
        }
        \eIf{ $\mathrm{mod}(k-n,p) > 0$ \KwSty{or} $j_k=0$  }{
            $f_{j_{k+1}} \gets \frac{n-k+pj_{k+1}}{k+1}f_{j_{k+1}}$\;
            }{
            $f_{j_{k+1}} \gets \frac{n-k+pj_{k+1}}{k+1}f_{j_{k+1}} + \frac{m+k(p-1)-p(j_{k+1}-1)}{k+1}f_{j_{k+1}-1}$\;
        }
    }
\caption{In-place computation of MacLaurin coefficients.}
\label{alg:inplace_maclaurin}
\end{algorithm}

\subsection{Specialized Horner's method}
After obtaining the nonzero MacLaurin coefficients, we can evaluate the approximating polynomial using what is essentially Horner's method. After running Algorithm~\ref{alg:inplace_maclaurin}, we obtain a vector $\mathbf{f}$ with components $(f_0,f_1,\ldots,f_J)$. The function $f(t) = \cq^m t \cdot \sq^n t$ is then approximated by
\begin{equation}
    f(t) \approx t^n \sum_{j=0}^J (-1)^j f_j t^{pj}.
\end{equation}
Following Horner's method, we define $b_J(t) = f_J$, so that we can recursively compute
\begin{equation}
    b_{j-1}(t) = f_{j-1} - b_j(t) t^p.
\end{equation}
The polynomial is then given by $t^n b_0(t)$.

\subsection{Factorial-type coefficients}
Another option is to work with the factorial-type coefficients defined in \eqref{eq:factor_definition}. As explained in the relevant section, these coefficients converge to the constant $\frac{1}{R_p^p}$. Hence, when the number of terms in the power series is very large, it can be advantageous to work with these for numerical stability.

We note that \eqref{eq:factorial_type_expansion} can be evaluated very efficiently. We define $b_0(t) = t^n$ and then recursively compute
\begin{equation}
    b_j(t) = -a_j \, t^p\, b_{j-1}(t),
\end{equation}
so that $f(t) = \sum_{j=0}^\infty b_j(t)$. This last sum can, of course, be computed in the same loop as the recursion for the $b_j(t)$'s. This method is preferable over alternatives, since the loop can be stopped when sufficient accuracy has been obtained. We simply monitor the size of the update $|b_j(t)|$ and see if it falls below the tolerance.

\section{Applications}

To start with, here we present in Table~\ref{tab:cqsq_maclaurin_coeffs} the 34 nonzero MacLaurin series coefficients that yield double precision approximations of $\cq t$ and $\sq t$ for $p=4$ on $[0,1]$. As pointed out earlier, this is sufficient to determine them everywhere on $\mathbb{R}$ due to symmetry and periodicity. We are unsure whether these coefficients have been presented elsewhere before.

\begin{table}[]
    \centering
    \begin{tabular}{c|c||c|c}
$c_{0}$  & $1$ & $s_{1}$  & $1$ \\
$c_{4}$  & $-0.25$ & $s_{5}$  & $-0.15$ \\
$c_{8}$  & $0.05625$ & $s_{9}$  & $3.958333333333333\cdot 10^{-2}$ \\
$c_{12}$  & $-1.552083333333333\cdot 10^{-2}$ & $s_{13}$  & $-1.127403846153846\cdot 10^{-2}$ \\
$c_{16}$  & $4.551382211538462\cdot 10^{-3}$ & $s_{17}$  & $3.351438772624435\cdot 10^{-3}$ \\
$c_{20}$  & $-1.378667338329563\cdot 10^{-3}$ & $s_{21}$  & $-1.023074271776018\cdot 10^{-3}$ \\
$c_{24}$  & $4.262157319358032\cdot 10^{-4}$ & $s_{25}$  & $3.178544597827111\cdot 10^{-4}$ \\
$c_{28}$  & $-1.336246164457652\cdot 10^{-4}$ & $s_{29}$  & $-9.999541389962689\cdot 10^{-5}$ \\
$c_{32}$  & $4.232450123540253\cdot 10^{-5}$ & $s_{33}$  & $3.175297169293426\cdot 10^{-5}$ \\
$c_{36}$  & $-1.35111622811414\cdot 10^{-5}$ & $s_{37}$  & $-1.015610312254756\cdot 10^{-5}$ \\
$c_{40}$  & $4.339818399729758\cdot 10^{-6}$ & $s_{41}$  & $3.267168144275697\cdot 10^{-6}$ \\
$c_{44}$  & $-1.400932280792665\cdot 10^{-6}$ & $s_{45}$  & $-1.055981761709645\cdot 10^{-6}$ \\
$c_{48}$  & $4.541006938457347\cdot 10^{-7}$ & $s_{49}$  & $3.426394557128631\cdot 10^{-7}$ \\
$c_{52}$  & $-1.477032838858058\cdot 10^{-7}$ & $s_{53}$  & $-1.115450251730353\cdot 10^{-7}$ \\
$c_{56}$  & $4.818461802551568\cdot 10^{-8}$ & $s_{57}$  & $3.641564287838735\cdot 10^{-8}$ \\
$c_{60}$  & $-1.575906447463121\cdot 10^{-8}$ & $s_{61}$  & $-1.191751121195521\cdot 10^{-8}$ \\
$c_{64}$  & $5.165517597599798\cdot 10^{-9}$ & $s_{65}$  & $3.908490898639448\cdot 10^{-9}$ \\
$c_{68}$  & $-1.696454484883442\cdot 10^{-9}$ & $s_{69}$  & $-1.284247026995244\cdot 10^{-9}$ \\
$c_{72}$  & $5.581090331746424\cdot 10^{-10}$ & $s_{73}$  & $4.226807242319111\cdot 10^{-10}$ \\
$c_{76}$  & $-1.838922930079956\cdot 10^{-10}$ & $s_{77}$  & $-1.393233097731723\cdot 10^{-10}$ \\
$c_{80}$  & $6.067464431015257\cdot 10^{-11}$ & $s_{81}$  & $4.59851156387092\cdot 10^{-11}$ \\
$c_{84}$  & $-2.004434127494118\cdot 10^{-11}$ & $s_{85}$  & $-1.519627134628728\cdot 10^{-11}$ \\
$c_{88}$  & $6.62928857700319\cdot 10^{-12}$ & $s_{89}$  & $5.027300346286176\cdot 10^{-12}$ \\
$c_{92}$  & $-2.194770387531149\cdot 10^{-12}$ & $s_{93}$  & $-1.664825711881474\cdot 10^{-12}$ \\
$c_{96}$  & $7.273112274138255\cdot 10^{-13}$ & $s_{97}$  & $5.518261217435007\cdot 10^{-13}$ \\
$c_{100}$  & $-2.412276992591459\cdot 10^{-13}$ & $s_{101}$  & $-1.83064057439664\cdot 10^{-13}$ \\
$c_{104}$  & $8.007193281522936\cdot 10^{-14}$ & $s_{105}$  & $6.07774971493359\cdot 10^{-14}$ \\
$c_{108}$  & $-2.659828303685806\cdot 10^{-14}$ & $s_{109}$  & $-2.019278258811832\cdot 10^{-14}$ \\
$c_{112}$  & $8.841456068111032\cdot 10^{-15}$ & $s_{113}$  & $6.71337044606192\cdot 10^{-15}$ \\
$c_{116}$  & $-2.940831036310465\cdot 10^{-15}$ & $s_{117}$  & $-2.233345646081413\cdot 10^{-15}$ \\
$c_{120}$  & $9.787544448612465\cdot 10^{-16}$ & $s_{121}$  & $7.434024574502131\cdot 10^{-16}$ \\
$c_{124}$  & $-3.259252131579342\cdot 10^{-16}$ & $s_{125}$  & $-2.475873000498906\cdot 10^{-16}$ \\
$c_{128}$  & $1.085894046080356\cdot 10^{-16}$ & $s_{129}$  & $8.250004847723769\cdot 10^{-17}$ \\
%$c_{132}$  & $-3.619665024085348\cdot 10^{-17}$ & $s_{133}$  & $-2.750350332169138\cdot 10^{-17}$
    \end{tabular}
    \caption{First 32 coefficients of the MacLaurin series for $\cq t$ ($c_k$) and $\sq t$ ($s_k$). These coefficients are available in a Matlab file upon request.}
    \label{tab:cqsq_maclaurin_coeffs}
\end{table}

\subsection{Computing $\pi_p$}
Being able to compute the squigonometric functions up to a given accuracy on all of $\mathbb{R}$ means we can very easily determine $\pi_p$. Indeed, we can simply apply any root-finder to, e.g., $\cq t - 2^{-\frac{1}{p}}$. This function has a root at $\frac{\pi_p}{4}$. After determining $J$ via \eqref{eq:max_terms_accuracy_estimate}, we employ Newton's method using the initial guess
\begin{equation}
    \frac{\pi_p}{4} \approx \cos\left( \tfrac{\pi}{p} \right) a_J^{-\frac{1}{p}}.
\end{equation}
This estimate becomes more accurate as $p$ becomes larger, since $J$ grows with increasing $p$. All values of $\pi_p$ are found within 4 iterations for $\varepsilon = 2^{-53}$.

\begin{table}[h]
    \centering
    \begin{tabular}{c|c|c|c}
          & value  & \# nnz MacLaurin \\ \hline 
          $\pi_3$ & $3.533277500570900$ & 22 \\
          $\pi_4$ & $3.708149354602744$  & 34 \\
          $\pi_5$ & $3.800600555953747$ & 46 \\
          $\pi_6$ & $3.855242593319996$ & 58 \\
          $\pi_7$ & $3.890174737625689$ & 69 \\
          $\pi_8$ & $3.913843287813181$ & 81 \\
          $\pi_9$ & $3.930614378886605$ & 92 \\
          $\pi_{10}$ & $3.942927897810032$ & 103
    \end{tabular}
    \caption{Several values of $\pi_p$ and the number of nonzero MacLaurin coefficients needed to reach machine precision.}
    \label{tab:pip_values}
\end{table}

As $p$ increases, the radius of convergence shrinks to $1$ from above, and $\frac{\pi_p}{4}$ approaches $1$ from below. Thus, we are operating more closely to the boundary of the convergence region. As such, we need increasingly many MacLaurin coefficients to compute the approximations accurately. To illustrate this, we have also listed the number of nonzero coefficients needed in Table~\ref{tab:pip_values}.

It is hard to quantify the relative performance of computing $\pi_p$ via the MacLaurin series and a root-finder, as compared to other methods. For instance, computing $\pi_p$ via a hypergeometric series converges linearly, i.e., a number of terms that grows linearly with the number of digits of accuracy. However, then we would only have the number $\pi_p$. We also need a linear number of MacLaurin coefficients, but then we have a squigonometric function up to a given accuracy on the entire real line. Computing $\pi_p$ is a happy by-product, as we could apply any root-finder and thereby obtain any convergence rate we desire. The limiting factor is the accuracy as determined by the number of MacLaurin coefficients.

\subsection{Computing the Beta function at rational values}
Poodiack and Wood state the following identity involving squigonometric functions and the Beta function,
\begin{equation}
    \int\limits_0^\frac{\pi_p}{2} \cq^m t \cdot \sq^n t \diff t = \tfrac{1}{p} \Beta\left( \tfrac{m+1}{p},\tfrac{n+1}{p}\right).
\end{equation}
This is a generalization of Euler's result for $p=2$. Using the symmetry of the squine and cosquine around $\frac{\pi_p}{4}$, we can also write this as
\begin{equation}
    \int\limits_0^\frac{\pi_p}{4} \cq^m t \cdot \sq^n t + \cq^n t \cdot \sq^m t \diff t = \tfrac{1}{p} \Beta\left( \tfrac{m+1}{p},\tfrac{n+1}{p}\right).
\end{equation}
This last form allows us to exploit the fact that the MacLaurin series are convergent in this interval.

Let us call $f(t) = \cq^m t \cdot \sq^n t$ and $g(t) = \cq^n t \cdot \sq^n t$, with MacLaurin coefficients $f_k$ and $g_k$ respectively. These can be computed up to a prescribed accuracy using Algorithm~\ref{alg:inplace_maclaurin}. Let us say we have $f_{n+pj}$ and $g_{m+pj}$ for $j = 1,2,\ldots,J$. Moreover, if the constant $\pi_p$ is not stored, it can also be extracted from the table of coefficients by realizing that $f(t) - 2^{-\frac{n+m}{p}}$ or $g(t) - 2^{-\frac{n+m}{p}}$ both have a root at $\frac{\pi_p}{4}$. After that, we simply compute
\begin{equation}
    \Beta\left( \tfrac{m+1}{p},\tfrac{n+1}{p}\right) \approx p \sum_{j=0}^J \left( \frac{f_{n+pj}}{n+pj+1} \left(\frac{\pi_p}{4}\right)^{n + pj +1} + \frac{g_{m+pj}}{m+pj+1} \left(\frac{\pi_p}{4}\right)^{m + pj +1} \right),
\end{equation}
where the error is on the order of the tolerance $\varepsilon$.

\section{Conclusion}
We have presented here a new approach for the calculation of derivatives of squigonometric functions based on derivative polynomials. This led, in turn, to the introduction of a new family of integer triangles. Eventually, we were able to compute MacLaurin and Taylor series for all squigonometric functions.

Our investigations led us to uncover some very interesting properties of squigonometric functions. We showed the derivative polynomials are real-rooted. As such, squigonometric functions take on algebraic values where the derivatives vanish. Furthermore, we were able to determine the exact number of roots of the derivative polynomials, specifying the multiplicity of the root at zero and showing that all other roots are negative. The roots of consecutive polynomials are interlacing. The number of points where the $k$th derivative of a squigonometric function with $p>2$ vanishes, grows linearly with $k$.

In terms of practical results, we have presented a recursion that allows the computation of MacLaurin series coefficients for the squine and cosquine for general $p$ up to arbitrary order. Algorithm~\ref{alg:inplace_maclaurin} represents a very efficient algorithm that computes a fixed number of nonzero MacLaurin series coefficients. Together with an error estimate, this allowed us to compute polynomial approximations that are accurate up to any given precision of $\mathbb{R}$. For completeness, we have listed a number of MacLaurin series coefficients for the 4-squine and 4-cosquine.

As an application, we computed several values of $\pi_p$, which are very easy to compute once the polynomial approximations are found. All cases needed only 4 iterations of Newton's method, starting at $t_0=1$, to compute $\pi_p$. We have also demonstrated a method to compute values of the Beta function at rational values up to any given accuracy.

\bibliographystyle{amsplain}

\begin{thebibliography}{10}

\bibitem{boyadzhiev2007}
Khristo~N. Boyadzhiev, \emph{Derivative polynomials for tanh, tan, sech and sec in explicit form}, Fibonacci Quart. \textbf{45} (2007), no.~4, 291 -- 303.

\bibitem{hoffman1995}
Michael~E. Hoffman, \emph{Derivative polynomials for tangent and secant}, The American Mathematical Monthly \textbf{102} (1995), no.~1, 23 -- 30.

\bibitem{hoffman1999}
\bysame, \emph{Derivative polynomials, {E}uler polynomials, and associated integer sequences}, The Electronic J. of Comb. \textbf{6} (1999).

\bibitem{wood_squigonometry}
Robert~D. Poodiack and William~E. Wood, \emph{Squigonometry: the study of imperfect circles}, Springer, 2022.

\bibitem{Schlosser_2018}
Michael~J. Schlosser, \emph{Summation formulas for noncommutative hypergeometric series}, p.~451–496, WORLD SCIENTIFIC, January 2018.

\bibitem{tirao_hypergeometric_matrix}
Juan~A. Tirao, \emph{The matrix-valued hypergeometric equation}, Proceedings of the National Academy of Sciences \textbf{100} (2003), no.~14, 8138--8141.


\end{thebibliography}

\end{document}